\NeedsTeXFormat{LaTeX2e} 
      
\documentclass{amsart}
\usepackage{amsmath,amsthm}
\usepackage{amsfonts,amssymb}

\usepackage{graphicx}
\usepackage{ifpdf}

\hfuzz1pc 

  

\newtheorem{thm}{Theorem}[section]
\newtheorem{cor}[thm]{Corollary}

\newtheorem{prop}[thm]{Proposition}
\newtheorem{exam}[thm]{Example}

\theoremstyle{remark}

\def\f{\frac}

 \def\a{{\alpha}} 
 
 \def\g{{\gamma}}

 \def\d{{\delta}}

 \def\s{{\sigma}}

 \def\sb{{\mathbf s}}
 \def\tb{{\mathbf t}}

 \def\CC{{\mathbb C}}

 \def\NN{{\mathbb N}}

 \def\RR{{\mathbb R}}

 \def\sH{{\mathsf H}}
 
 \newcommand{\e}{\mathrm{e}}

        \def\sign{\operatorname{sign}}

\newcommand{\wh}{\widehat}

\begin{document}
 
\title[] 
{ Positivity and Fourier integrals over regular hexagon}
\author{Yuan Xu}
\address{Department of Mathematics\\ University of Oregon\\
 Eugene, Oregon 97403-1222.}\email{yuan@uoregon.edu}

\date{\today}
\keywords{Fourier integral, hexagon, positivity Bochner-Riesz means, positive definite function}
\subjclass[2000]{42B08, 41A25, 41A63}
\thanks{The work was supported in part by NSF Grant DMS-1510296}

\begin{abstract}
Let $f \in L^1(\RR^2)$ and let $\wh f$ be its Fourier integral. We study summability of the partial integral 
$S_{\rho,\sH}(x)=\int_{\{\|y\|_\sH \le \rho\}} e^{i x\cdot y}\wh f(y) dy$, where $\|y\|_\sH$ denotes the uniform 
norm taken over the regular hexagonal domain.  We prove that the Riesz $(R,\delta)$ means of the inverse 
Fourier integrals are nonnegative if and if $\d \ge 2$. Moreover, we describe a class of $\|\cdot\|_\sH$-radial 
functions that are positive definite on $\RR^2$.   
\end{abstract}

\maketitle

\section{Introduction}
\setcounter{equation}{0}

The classical Bochner-Riesz means of the Fourier integral have kernels that are radial functions, or the 
$\|\cdot\|_2$-radial functions, where $\|\cdot\|_2$ denotes the usual Euclidean norm. We study their 
analogues that have kernels being $\|\cdot\|_\sH$-radial functions, where $\|\cdot\|_\sH$ denotes the uniform 
norm of the regular hexagonal domain of $\RR^2$, and $\|\cdot\|_\sH$-radial functions that are positive 
definite functions on $\RR^2$.

Let $f$ be a function in $L^1(\RR^d)$. The Foruier transform $\wh f$ and its inverse are defined by 
\begin{equation}\label{eq:oldFT}
\wh f(y) = \f1{(2\pi)^d} \int_{\RR^d} e^{-i x \cdot y} f(x) dx \quad\hbox{and}\quad  f(x) = \int_{\RR^d} e^{i y\cdot x} \wh f(y) dy, 
\end{equation}
where the latter integral need not exist for an arbitrary function $f \in L^1(\RR^d)$, a fact that motivates the study of
summability methods. The classical Bochner-Riesz means (cf. \cite{SW}) of the inverse Fourier transform are defined by  
\begin{equation} \label{eq:BR-means}
  S_{R, \delta}^{(2)}f(x) = \int_{ \|y\|_2 \le R } \left(1- \f{\|y\|_2}{R} \right)^\d e^{i y\cdot x} \wh f(y) dy.
\end{equation}
The convergence of these means has been studied extensively. If $\|y\|_2$ is
replaced by the $\ell_1$ norm $|y|_1: =|y_1|+\ldots+|y_d|$ in \eqref{eq:BR-means}, we denote the new means by 
$S_{R,\delta}^{(1)} f$ and call them $\ell_1$-Riesz $(R,\delta)$ means. It was proved in \cite{BX} that, in $\ell_1$ 
summability, the $(R,\delta)$ means $S_{R,\delta}^{(1)} f$ define positive linear transformations on $L^1(\RR^d)$ 
exactly when $\delta \ge 2d-1$. In contrast, in $\ell_2$ summability, the Bochner-Riesz means do not define positive 
transformations for any $\d >0$ \cite{Go}.

In the present paper we study the case when $\|\cdot\|_2$ in \eqref{eq:BR-means} is replaced by the uniform norm 
$\|\cdot\|_\sH$ over the regular hexagonal domain in $\RR^2$. In this case it is more convenient to work in homogeneous 
coordinates of 
$$
\RR_{\sH}^3: = \{\tb = (t_1,t_2,t_3) \in \RR^3: t_1+t_2+t_3 =0\},  
$$
for which the regular hexagonal domain is equivalent to $\{\tb \in \RR^3: \|\tb\|_\sH \le 1\}$, where 
$$
\|\tb\|_\sH:= \max_{1\le i \le 3} |t_i|.
$$ 
In $\RR_{\sH}^3$ the Fourier transform and its inverse can be defined by 
\begin{equation}\label{eq:FT}
\wh f(\sb) = \f1{3\pi^2} \int_{\RR_{\sH}^3} e^{- \frac{2i}{3}  \tb \cdot \sb} f(\tb) d\tb \quad\hbox{and}\quad  f(\tb) = \int_{\RR_{\sH}^3} 
   e^{\frac{2i}{3}  \tb \cdot \sb} \wh f(\sb) d\sb, 
\end{equation}
as we shall see in the next section. The Riesz  $(R,\delta)$ means are then become 
\begin{equation} \label{eq:R-H-means}
  S_{R, \delta} f(\tb) := \int_{ \|\sb\|_\sH \le R} \left(1- \f{\|\sb\|_\sH}{R} \right)^\d e^{i \sb\cdot \tb} \wh f(\sb) d\sb. 
\end{equation}

The symmetry of the regular hexagonal domain makes it possible to derive a close form for the Dirichlet kernel, 
$$
   D_R(\tb) = \int_{\|\tb\|_{\sH} \le R} e^{ \frac{2 i}{3} \sb \cdot \tb} d\sb, \qquad \tb \in \RR_{\sH}^3,
$$
which can be used to establish the following theorem. 

\begin{thm}\label{thm1}
For $\delta \ge 2$ the Riesz $(R,\delta)$ means of the hexagonal partial integral \eqref{eq:R-H-means} define positive linear 
transformations on $L^1(\RR_{\sH}^3)$; the order of the summability to assure positivity is best possible.
\end{thm}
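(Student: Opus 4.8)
The plan is to realize $S_{R,\delta}$ as a convolution operator on $\RR_\sH^3$ and to reduce positivity of the operator to pointwise nonnegativity of its kernel. Writing $S_{R,\delta}f=f*K_{R,\delta}$, where $K_{R,\delta}$ is the inverse hexagonal Fourier transform of the Riesz factor $(1-\|\sb\|_\sH/R)_+^\delta$, the transformation is positive on $L^1(\RR_\sH^3)$ if and only if $K_{R,\delta}(\tb)\ge 0$ for every $\tb$. The dilation $\sb\mapsto R\sb$ gives $K_{R,\delta}(\tb)=R^2K_{1,\delta}(R\tb)$, so it suffices to treat $R=1$; write $K_\delta:=K_{1,\delta}$. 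Integrating the identity $(1-\|\sb\|_\sH)^\delta=\delta\int_{\|\sb\|_\sH}^1(1-r)^{\delta-1}\,dr$ against the exponential and interchanging the order of integration expresses $K_\delta$ as a fractional average of Dirichlet kernels,
\[
K_\delta(\tb)=\f{\delta}{3\pi^2}\int_0^1(1-r)^{\delta-1}D_r(\tb)\,dr .
\]
Inserting the closed form of $D_r$ supplied by the hexagonal symmetry and integrating in $r$ term by term collapses everything to one one-dimensional factor: with $w_1=\f23(t_2-t_3)$, $w_2=\f23(t_3-t_1)$, $w_3=\f23(t_1-t_2)$, so that $w_1+w_2+w_3=0$,
\[
K_\delta(\tb)=\f{-2\sqrt3}{3\pi^2\,w_1w_2w_3}\sum_{i=1}^3 w_i\,F_\delta(w_i),\qquad F_\delta(w):=\delta\int_0^1(1-r)^{\delta-1}\cos(rw)\,dr .
\]

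The decisive and hardest step is the endpoint $\delta=2$, where $F_2(w)=2(1-\cos w)/w^2=4\sin^2(w/2)/w^2$ is the classical one-dimensional Fej\'er kernel. Substituting this and setting $v_i=w_i/2$ reduces the claim $K_2\ge 0$ to the pointwise inequality
\[
Q:=v_2v_3\sin^2 v_1+v_3v_1\sin^2 v_2+v_1v_2\sin^2 v_3\le 0,\qquad v_1+v_2+v_3=0 .
\]
I would prove this by eliminating $v_3=-(v_1+v_2)$, expanding $\sin^2(v_1+v_2)$, and reorganizing the left-hand side as $Q=-\bigl[A^2+B^2-2AB\cos(v_1+v_2)\bigr]$ with $A=v_2\sin v_1$ and $B=v_1\sin v_2$; since $A^2+B^2-2AB\cos\theta\ge(|A|-|B|)^2\ge 0$ for all real $\theta$, this yields $Q\le 0$ and hence $K_2\ge 0$, the locus $w_1w_2w_3=0$ being treated by continuity. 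Recognizing the trigonometric sum in exactly this signed form is the main obstacle.

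To pass from $\delta=2$ to $\delta>2$ I would use the classical positive reproducing identity
\[
(1-u)^\delta=\f{\Gamma(\delta+1)}{\Gamma(3)\,\Gamma(\delta-2)}\int_u^1\Bigl(1-\f u\lambda\Bigr)^2(1-\lambda)^{\delta-3}\lambda^2\,d\lambda,\qquad 0\le u\le 1,
\]
valid for $\delta>2$ with a strictly positive, integrable weight. Taking $u=\|\sb\|_\sH$ and interchanging the integrals exhibits $K_\delta$ as a nonnegative superposition of the dilates $\lambda^2K_2(\lambda\,\cdot)$, so $K_2\ge 0$ forces $K_\delta\ge 0$ for every $\delta\ge 2$.

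For sharpness I would restrict to the single ray $w_1=w_2=-s$, $w_3=2s$, where the formula for $K_\delta$ collapses to $K_\delta=\tfrac{-2\sqrt3}{3\pi^2 s^2}\bigl(F_\delta(2s)-F_\delta(s)\bigr)$. At $\delta=2$ one computes $F_2(2s)-F_2(s)=-4\sin^4(s/2)/s^2\le 0$, consistent with positivity; but the asymptotics
\[
F_\delta(w)=\f{\delta(\delta-1)}{w^2}+\Gamma(\delta+1)\,\f{\cos\!\left(w-\tfrac{\pi\delta}{2}\right)}{w^\delta}+(\text{lower order})
\]
show that for $\delta<2$ the oscillatory $w^{-\delta}$ term dominates the non-oscillatory $w^{-2}$ term, so $F_\delta(2s)-F_\delta(s)$ becomes positive along a sequence $s\to\infty$ and $K_\delta$ takes negative values. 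Thus $\delta=2$ is exactly the threshold for positivity.
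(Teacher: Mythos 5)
Your proposal is correct, and its overall architecture coincides with the paper's: reduce positivity of the operator to pointwise nonnegativity of the kernel, dilate to $R=1$, write the Riesz kernel as a fractional average of Dirichlet kernels whose closed form collapses everything onto the one-variable factor $F_\delta$, settle the endpoint $\delta=2$ by an algebraic--trigonometric identity, bootstrap to $\delta>2$ by a positive reproducing formula (your identity in $\lambda$ is exactly the paper's relation expressing $D_R^{\delta+\mu}$ as a positive average of $D_\rho^\delta$), and exhibit negativity for $\delta<2$ along the ray $w_1=w_2=-s$, $w_3=2s$ --- note that the paper's test point $(3\pi/R,-3\pi/R,0)$ lies on precisely this ray at $s=2\pi$. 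Where you genuinely diverge is in the execution of the two decisive steps, and both of your variants work. At $\delta=2$ the paper verifies a symmetric sum-of-two-squares identity for $G(\tb)$ that it concedes must be checked by a computer algebra system; your two-variable identity
\begin{equation*}
Q=-\bigl[A^2+B^2-2AB\cos(v_1+v_2)\bigr],\qquad A=v_2\sin v_1,\quad B=v_1\sin v_2,
\end{equation*}
obtained after eliminating $v_3=-(v_1+v_2)$, is verifiable in a few lines by hand (I checked it: the difference of the two sides equals $v_1v_2\bigl[\sin^2(v_1+v_2)-\sin^2 v_1-\sin^2 v_2-2\sin v_1\sin v_2\cos(v_1+v_2)\bigr]$, which vanishes identically), and $A^2+B^2-2AB\cos\theta\ge(|A|-|B|)^2\ge 0$ finishes the endpoint case; since the kernel is a positive constant times $-Q/(v_1v_2v_3)^2$, the sign bookkeeping is right, and the degenerate locus is indeed handled by continuity. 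This is arguably an improvement over the paper's CAS-checked identity. For sharpness the paper computes exactly at $s=2\pi$: it integrates $F_\delta(2\pi)-F_\delta(4\pi)$ by parts and folds $[1/2,1]$ onto $[0,1/2]$ to get a strict sign for $1<\delta<2$, disposing of $0<\delta\le 1$ by the remark that nonnegativity for some $\delta$ implies it for all larger $\delta$; you instead invoke the two-term endpoint asymptotics of $F_\delta$, which treats all $0<\delta<2$ at once and shows failure along an unbounded sequence (choosing $s_n$ with $\cos(s_n-\pi\delta/2)=-1$ gives $\cos(2s_n-\pi\delta/2)=\cos(\pi\delta/2)$, so the oscillatory contribution is at least $\Gamma(\delta+1)(1-2^{-\delta})s_n^{-\delta}$, beating the $O(s_n^{-2})$ term). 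That step is sound but is the one place your write-up should be fleshed out: for $0<\delta\le 1$ the naive integration by parts on $(1-r)^{\delta-1}$ produces divergent boundary terms, so you need Erd\'elyi-type endpoint asymptotics there, or simply borrow the paper's monotonicity reduction to $1<\delta<2$. In short, the paper's sharpness argument buys an exact, asymptotics-free verification at a single point, while yours buys uniformity in $\delta$ and a structural explanation of why $\delta=2$ is the threshold (the $w^{-\delta}$ oscillation versus the $w^{-2}$ decay).
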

 
We note that the minimal order of the summability to assure positivity of the Riesz $(R,\delta)$ means for $\ell_1$ 
summability is  $\delta \ge 3$ when $d=2$. 

A function $\phi: \RR_{\sH}^3 \mapsto \RR$ is called $\|\cdot\|_\sH$ invariant, or $\|\cdot\|_\sH$-radial, 
if $\phi(\tb) = \phi_0(\|\tb\|_\sH)$ for some $\phi_0: \RR_+ := [0,\infty) \mapsto \RR$. Although the 
Dirichlet kernel is not $\|\cdot\|_\sH$ radial, it has additional structure that allows us to characterize positive 
definiteness of such functions.

A function $\phi$ on $\RR^d$ is said to be {\it positive definite} if for any set of points $x_1,\ldots, x_N$ in $\RR^d$ and scalars $c_1,\ldots, c_N$ 
in $\CC$, $N \in \NN$, 
$$
   \sum_{j=1}^N  \sum_{k=1}^N c_j c_k \phi (x_j - x_k) \ge 0; 
$$
that is, the matrix $[\phi (x_j - x_k)]_{j, k =1}^N$ is positive semi-definite for all $\{x_j\}$, $\{c_j\}$ and $N$. Bochner 
proved in \cite{Bo} that a continuous function $\phi$ on $\RR^d$ is positive definite exactly when it is the Fourier 
integral of a finite positive measure on $\RR^d$. Shoenberg specialized Bochner's theorem for $\ell_2$ radial functions 
and proved that $\phi(\|x\|_2)$ is positive define exactly when 
\begin{equation} \label{eq:radial-pdf}
   \phi(t) = \int_0^\infty \Omega_d(t u) d\alpha(u) \quad \hbox{with} \quad 
       \Omega_d(r): = \left( \f2{r} \right)^{\f{d-2}{2}} J_{\f{d-2}{2}}(r)
\end{equation}
and $\a(u)$ is non-decreasing and bounded for $u \ge 0$, where $J_\g$ is the Bessel function. An analogue result was 
proved in \cite{BX} for $\ell_1$-radial functions $\phi(|x|_1)$, for which $\phi$ is characterized by \eqref{eq:radial-pdf} 
with $\Omega_d$ replaced by $m_d$, where $m_d$ is a function that can be defined either recursively or as the Fourier
transform of the B-spline function $x\mapsto M(\cdot| x_1^2,\ldots,x_d^2)$ on $\RR^d$. In particular, $m_2$ is given by 
$$
   m_2(\xi) =   \int_\xi^\infty \frac{\sin u}{u} du.
$$
In the case of $d =2$, under the change of variables $x = s+t$ and $y=s-t$, the $\ell_1$ ball $\{x\in \RR^2: |x|_1 \le 1\}$ 
becomes the square $[-1,1]^2$  and the $\ell_1$-radial functions become $\|\cdot\|_\infty$, or $\ell_\infty$-radial functions. Hence, the result in \cite{BX} also gives a characterization of $\ell_\infty$ radial functions. 

Our second  result shows that the class of positive definite $\|\cdot\|_\sH$ radial functions is at least as big as 
the class of positive definite $\ell_1$ or $\ell_\infty$ radial functions. 

\begin{thm}\label{thm2}
Let $\phi \in C_b(\RR_+)$. The function $\phi(\|\tb\|_\sH)$, $\tb \in \RR_{\sH}^3$, is positive definite on $\RR_{\sH}^3$ 
if there exists an increasing bound function $\a$ on $\RR_+$ such that 
\begin{equation*}
 \phi(t) = \int_0^\infty  m_2(t u)  d \a(u). 
\end{equation*}
\end{thm}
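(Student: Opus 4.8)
The plan is to isolate a single ``generating'' function and then recover the general statement by a positivity-preserving averaging argument. Since $\phi \in C_b(\RR_+)$ and $\alpha$ is bounded, the function $\tb \mapsto \phi(\|\tb\|_\sH) = \int_0^\infty m_2(u\|\tb\|_\sH)\, d\alpha(u)$ is continuous and bounded, so Bochner's characterization applies. For fixed $u \ge 0$ one has $m_2(u\|\tb\|_\sH) = m_2(\|u\tb\|_\sH) = \Phi(u\tb)$ with $\Phi(\tb) := m_2(\|\tb\|_\sH)$. Because positive definiteness is preserved under the linear substitution $\tb \mapsto u\tb$ (apply the defining inequality to the scaled points $u x_j$) and under integration against the positive measure $d\alpha$, the identity
\begin{equation*}
 \sum_{j,k} c_j c_k\, \phi(\|x_j - x_k\|_\sH) = \int_0^\infty \Big( \sum_{j,k} c_j c_k\, \Phi(u(x_j - x_k)) \Big) d\alpha(u)
\end{equation*}
reduces the whole theorem to the single claim that $\Phi(\tb) = m_2(\|\tb\|_\sH)$ is positive definite on $\RR_{\sH}^3$.

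For this base case I would invoke Bochner's theorem in the form quoted in the introduction: $\Phi$ is positive definite precisely when it is the Fourier integral \eqref{eq:FT} of a nonnegative measure. Hence it suffices to show that the function $g$ obtained by applying the forward transform in \eqref{eq:FT} to $\Phi$ is pointwise nonnegative. Writing $m_2$ as a superposition of indicators of hexagonal balls,
\begin{equation*}
 m_2(\|\tb\|_\sH) = \int_{\|\tb\|_\sH}^\infty \frac{\sin u}{u}\, du = \int_0^\infty \frac{\sin R}{R}\, \mathbf{1}\{\|\tb\|_\sH \le R\}\, dR,
\end{equation*}
and exchanging the order of integration, the transform of $\Phi$ becomes
\begin{equation*}
 g(\sb) = \frac{1}{3\pi^2} \int_0^\infty \frac{\sin R}{R}\, D_R(\sb)\, dR,
\end{equation*}
where $D_R$ is the hexagonal Dirichlet kernel, which is real and even by the symmetry of the hexagon. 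Thus the theorem is equivalent to the estimate $g(\sb) \ge 0$.

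The hard part is precisely this nonnegativity. Both factors $\frac{\sin R}{R}$ and $D_R(\sb)$ change sign, and the $R$-integral converges only conditionally, so a direct sign analysis is delicate; this is where the closed form of $D_R$ established before \thmref{thm1} must be used. I would first rescale via the homogeneity $D_R(\sb) = R^2 D_1(R\sb)$ to reduce to a one-parameter oscillatory integral, and then exploit the three-fold symmetry of the hexagon — concretely its dissection into three rhombi, each an affine image of a square — to split $g$ into pieces each governed by an $\ell_\infty$ (equivalently $\ell_1$) norm in two variables. On each piece the elementary $R$-integrals should collapse the computation to the \emph{nonnegative} Fourier transform of $m_2(|\cdot|_1)$ on $\RR^2$ recorded in \cite{BX}, yielding $g \ge 0$ and hence the positive definiteness of $\Phi$. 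The two technical points that demand care are the justification of Fubini for the conditionally convergent integral and the verification that the rhombic decomposition of $D_R$ reproduces the planar $\ell_\infty$ kernel exactly, so that the known $d=2$ positivity can be transferred.
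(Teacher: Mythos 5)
Your reductions are sound and in fact track the paper's own opening moves: the scaling-plus-superposition argument reduces the theorem to a single kernel, and writing $m_2$ as an integral of indicators of hexagonal balls turns that into the nonnegativity of $g(\sb)=\frac{1}{3\pi^2}\int_0^\infty \frac{\sin R}{R}\,D_R(\sb)\,dR$, which is, up to normalization, exactly the kernel $J(\tb)$ the paper studies (the paper reaches it via Proposition \ref{prop:FT2}, i.e.\ through $E_\rho=\frac{d}{d\rho}D_\rho$, which also handles the regularization issue you flag, since $m_2(\|\tb\|_\sH)$ decays only like $\|\tb\|_\sH^{-1}$ and is not in $L^1(\RR_\sH^3)$). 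But the step you defer is the entire content of the theorem, and the route you sketch for it would fail. The three rhombi in the dissection are \emph{not} centered at the origin: each is a linear image of the off-center square $[0,1]\times[-1,0]$, so its Fourier transform carries nontrivial phase factors (this is visible in the paper's computation of $I(1,2)$, which is complex), and after the $R$-integration each piece contributes a \emph{signed} term. Moreover the hexagon is not an affine image of a square, so no linear substitution can transfer the $\ell_\infty$ positivity of \cite{BX}; the $d=2$ square case works precisely because the centered square factors into a product of symmetric one-dimensional kernels, which the off-center rhombi do not.

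What actually happens is a genuinely three-term hexagonal cancellation that cannot be seen piecewise. Using \eqref{eq:Erho} the paper evaluates
\begin{equation*}
 J(\tb) = - \frac{3 \pi}{2} \left[\frac{\chi_{ \{|t_1-t_2|\le 1\}}(\tb)}{(t_2-t_3)(t_3-t_1)}
          +\frac{\chi_{ \{|t_2-t_3|\le 1\}}(\tb)}{(t_3-t_1)(t_1-t_2)}
          +\frac{\chi_{ \{|t_3-t_1|\le 1\}}(\tb)}{(t_1-t_2)(t_2-t_3)}\right],
\end{equation*}
in which each summand changes sign on $\RR_\sH^3$; nonnegativity of the sum is then proved by a region-by-region analysis over the sets $E_{---}$, $E_{--+}$, $E_{-++}$, $E_{+++}$ (according to which of $|t_1-t_2|,|t_2-t_3|,|t_3-t_1|$ exceed $1$), showing that the constraint $t_1+t_2+t_3=0$ forbids exactly those sign configurations of the denominators that would make $J$ negative. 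The resulting $J$ vanishes identically on $E_{---}$ and $E_{+++}$ and is supported on a thin spider-shaped set --- a structure incompatible with being a sum of three nonnegative square-type kernels, so the hoped-for ``collapse to the $\ell_1$ transform of \cite{BX} on each rhombus'' cannot occur. To complete your proof you would need to replace that sketch with this explicit evaluation of the $R$-integral (via the computation $\int_0^\infty \sin(u\rho)\, m_2(\rho)\,d\rho = \frac{\pi}{4u}\bigl(1-\sign(1-|u|)\bigr)$, justified by exchanging the order of integration) and then carry out the sign analysis; as it stands, the key positivity claim is unsupported and, as formulated, false.
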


We do not know if the converse of Theorem \ref{thm2} holds; that is, whether $\phi(\|\tb\|_\sH)$ is positive definite {\it only if}
$\phi$ is of the form given in the theorem. In \cite{BX}, the inverse in the $\ell_1$ case was established by writing the 
Dirichlet kernel as an integral against a B-spline function whose knots are the variables, and studying the Fourier 
transform of the B-spline function. The kernel in the hexagonal setting can also be written as an integral against a 
B-spline function, but the resulted B-spline function is no longer integrable, which prevents us from following the 
same approach.   

The paper is organized as follows. The set-up of the Fourier integral on the hexagonal domain and the analysis, 
based on the closed formula for the Dirichlet kernel, that leads to the proof of Theorem \ref{thm1} are given in the next
section. The positive definiteness of $ \|\cdot\|_\sH$-radial functions is discussed in Section 3. 
 
\section{Fourier integral on the hexagonal domain}
\setcounter{equation}{0}
 
The regular hexagonal domain on $\RR^2$ is given by 
$$
   H =\left\{(x_1,x_2):\  -1\leq x_2, \tfrac{\sqrt{3}}{2}x_1 \pm 
   \tfrac{1}{2} x_2 < 1 \right\}.
$$ 
We consider the Fourier integral $\int_{H} \wh f(y) e^{- i x\cdot y} dy$. Such integrals have been studied, say, in 
\cite{AB}, where the Tur\'an's problem on positive definite functions on the hexagonal domain is studied.
For our purpose, it is more convenient to use homogeneous coordinates $\tb = (t_1,t_2,t_3)$ in the space 
$$
    \RR_{\sH}^3 : = \{\tb = (t_1,t_2,t_3)\in \RR^3: t_1+t_2 +t_3 =0\}, 
$$
for which the hexagonal domain $H$ becomes 
\begin{align*}
   \sH:=\left\{\tb \in \RR_{\sH}^3:\  -1\le  t_1,t_2, t_3 \le 1 \right\} = \{\tb \in \RR_{\sH}^3: \|\tb\|_\sH \le 1\}, 
\end{align*}
where $\|\tb\|_\sH= \max_{1\le i \le 3} |t_i|$. Geometrically $\sH$ is the intersection of the plane $t_1+t_2+t_3=0$ 
with the cube $[-1,1]^3$ as shown in Figure 1. 
Such a convenience is used in \cite{LSX,X10} where the Fourier series associated with the hexagonal lattice and
their applications in discrete Fourier transform are studied.
\begin{figure}[ht]
\centerline{
\includegraphics[width=0.375\textwidth]{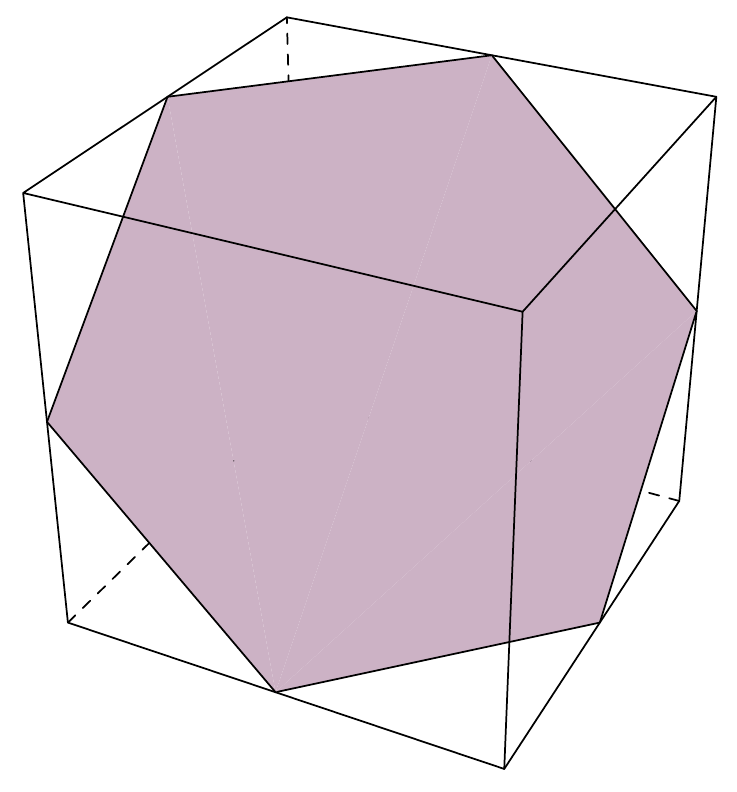}}
\caption{Regular hexagon in $\RR_{\sH}^3$.}
\end{figure}

For convenience, we adopt the convention of using bold letters, such as $\tb$, to denote points in homogeneous
coordinates of $\RR_{\sH}^3$ throughout this paper. The relation between $(x_1,x_2) \in H$ and $\tb \in \sH$ is
given by 
\begin{align*}
 t_1= -\frac{x_2}{2} +\frac{\sqrt{3}x_1}{2},\quad t_2 = x_2, \quad t_3 = 
      -\frac{x_2}{2} -\frac{\sqrt{3}x_1}{2}, 
\end{align*}
or, equivalently, using the fact that $t_1+t_2+t_3 =0$, 
\begin{equation} \label{x-t-x}
x_1 =  \tfrac13 (t_1-t_3) =   \tfrac13 (2 t_1+t_2), \quad x_3 =  \tfrac13 (t_2-t_3) =   \tfrac13 ( t_1+2t_2).
\end{equation}
Let $d \tb$ be the Lebesgue measure on $\RR_{\sH}^3$. Computing the Jacobian of the change of variables shows 
that $d x_1 dx_2 = \frac{2 \sqrt{3}} {3} d \tb$. With $x$ and $y$ associated to $\tb$ and $\sb$, respectively,
through \eqref{x-t-x}, it is easy to see that $x\cdot y = \frac23 \sb\cdot \tb$. If we identity the function $f(x)$ on $\RR^2$ with 
$f(\tb)$ on $\RR_{\sH}^3$, then the Fourier transform and its inversion \eqref{eq:oldFT} translate to   
\begin{equation*} 
  \wh f(x) \mapsto \frac{\sqrt{3}}{ 6 \pi} \int_{\RR_H^3} f(\tb) \e^{- \frac{2 i}{3} \sb \cdot \tb} d\tb
   \quad \hbox{and} \quad
  f(x) \mapsto \frac{2\sqrt{3}}{3} \int_{\RR_H^3} \wh f(\sb) \e^{\frac{2 i}{3} \sb \cdot \tb} d\sb. 
\end{equation*} 
For convenience we renormalize them and defined the Fourier transform and its inverse on $\RR_{\sH}^3$ as in \eqref{eq:FT},
that is,
\begin{equation*}
\wh f(\sb) = \f1{3\pi^2} \int_{\RR_{\sH}^3} e^{- \frac{2i}{3}  \tb \cdot \sb} f(\tb) d\tb \quad\hbox{and}\quad  f(\tb) = \int_{\RR_{\sH}^3} 
   e^{\frac{2i}{3}  \tb \cdot \sb} \wh f(\sb) d\sb. 
\end{equation*} 
The usual definition for the convolution $f*g$ extends to $f,g\in L^1(\RR_{\sH}^3)$ by
$$ 
   f*g(\tb) = \int_{\RR_\sH^3} f(\tb -\sb) g(\sb) d\sb, \qquad f,g \in L^1(\RR_\sH^3).
$$  

For $\rho > 0$ we first consider the hexagonal summability of the inverse Fourier integral 
\begin{equation}\label{eq:Srho}
    \sigma_\rho (f; \tb) = \int_{\|\tb\|_\sH \le \rho} \wh f(\sb)  e^{\frac{2i}{3}\sb \cdot \tb} d\sb  = (f \ast D_\rho)(\tb), 
\end{equation}
where $D_\rho$ is the Dirichlet kernel for the regular hexagonal domain, 
$$
  D_\rho (\tb) : =  \int_{\|\tb\|_\sH \le \rho}   e^{- \frac{2i}{3}\sb \cdot \tb} d\sb, \qquad \tb \in \RR_\sH^3.
$$
Our first result is a closed from for the Dirichlet kernel for the hexagonal domain.

\begin{prop} \label{prop:D-kernel}
For $\rho > 0$, 
\begin{equation}\label{eq:Drho}
    D_\rho(\tb) = -  \frac{9}{2} \left[ \frac{ \cos \left[\frac{2 }{3} \rho(t_1-t_2)\right]} {(t_2-t_3)(t_3-t_1)} +
              \frac{ \cos \left[\frac{2 }{3} \rho(t_2-t_3)\right]} {(t_3-t_1)(t_1-t_2)} +  
              \frac{ \cos \left[\frac{2 }{3} \rho(t_3-t_1)\right]} {(t_1-t_2)(t_2-t_3)}\right].
\end{equation}  
\end{prop}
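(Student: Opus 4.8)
The plan is to evaluate the defining integral directly in an affine chart of the plane $\RR_\sH^3$. Because the hexagon $\{\sb : \|\sb\|_\sH \le \rho\}$ is invariant under $\sb \mapsto -\sb$, the kernel is real, so $D_\rho(\tb) = \int_{\|\sb\|_\sH \le \rho} \cos\!\big(\tfrac23 \sb\cdot\tb\big)\, d\sb$. First I would coordinatize $\RR_\sH^3$ by $(s_1,s_2)$, setting $s_3 = -s_1 - s_2$; then $\sb\cdot\tb = s_1(t_1 - t_3) + s_2(t_2 - t_3)$, the measure $d\sb$ reduces (in the normalization used for $d\tb$) to $ds_1\, ds_2$, and the hexagon becomes the planar region $R = \{(s_1,s_2) : |s_1| \le \rho,\ |s_2| \le \rho,\ |s_1 + s_2| \le \rho\}$. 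Writing $p = \tfrac23(t_1 - t_3)$ and $q = \tfrac23(t_2 - t_3)$, the task is thus to compute $\iint_R e^{i(p s_1 + q s_2)}\, ds_1\, ds_2$, whose real part is $D_\rho(\tb)$.

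I would carry out this double integral by iteration. For fixed $s_2$ the inner range is $s_1 \in [-\rho - \min(s_2,0),\, \rho - \max(s_2,0)]$, so splitting the outer integral at $s_2 = 0$ reduces everything to elementary integrals of exponentials. The inner $s_1$-integration contributes a factor $1/p$, while the outer $s_2$-integration produces the two denominators $q$ and $q - p = \tfrac23(t_2 - t_1)$, arising from the combined frequencies $q$ and $q-p$. After collecting terms, all oscillatory exponentials assemble into $\cos(p\rho)$, $\cos(q\rho)$, and $\cos((p-q)\rho)$; a short partial-fraction step, using $\tfrac1{pq} - \tfrac1{p(q-p)} = -\tfrac1{q(q-p)}$, then puts the result in three-term form. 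Restoring $p$ and $q$, using that the cosine is even, and invoking $t_1 + t_2 + t_3 = 0$ rewrites the three denominators symmetrically as $(t_2 - t_3)(t_3 - t_1)$, $(t_3 - t_1)(t_1 - t_2)$, and $(t_1 - t_2)(t_2 - t_3)$, while the accumulated powers of $\tfrac23$ and $\rho$ produce the overall constant $-\tfrac92$, giving exactly \eqref{eq:Drho}.

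The hard part will be the bookkeeping: one must verify that the non-oscillatory contributions and the cross terms cancel, so that precisely the symmetric sum of three cosines survives. A convenient check is the degenerate limit $t_2 = t_3$ (so $q \to 0$), where the right-hand side of \eqref{eq:Drho} must collapse to $\tfrac{2\rho \sin(p\rho)}{p} + \tfrac{2(1 - \cos(p\rho))}{p^2}$, a value one can compute independently from the $s_2 = 0$ slice. A second, routine point is that \eqref{eq:Drho} has removable singularities when two of the $t_i$ coincide; I would establish the identity first for $\tb$ with pairwise-distinct coordinates and then extend it to all of $\RR_\sH^3$ by continuity, both sides being continuous (indeed entire) in $\tb$. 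As an independent cross-check that also makes the underlying $S_3$-symmetry manifest, one can instead apply the divergence theorem on the plane to turn $\iint_R e^{i(ps_1 + qs_2)}$ into a sum of six edge integrals around $\partial R$; grouping the six edges into the three antipodal pairs collapses them directly into the three cosine terms.
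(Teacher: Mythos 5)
Your argument is correct, and it goes through exactly as planned: with $p=\tfrac23(t_1-t_3)$, $q=\tfrac23(t_2-t_3)$, the two-piece iterated integral evaluates to
$2\left[\frac{\cos((p-q)\rho)}{pq}+\frac{\cos(q\rho)}{p(p-q)}-\frac{\cos(p\rho)}{q(p-q)}\right]$,
which is precisely \eqref{eq:Drho} rewritten (using $(t_2-t_3)(t_3-t_1)=-\tfrac94 pq$ etc.), your partial-fraction identity $\tfrac1{pq}-\tfrac1{p(q-p)}=-\tfrac1{q(q-p)}$ is what recombines the $\cos(p\rho)$ terms, and your degenerate check at $q=0$ matches a direct slice computation. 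The route, however, differs from the paper's in the decomposition. The paper first rescales to $\rho=1$ via $D_\rho(\tb)=\rho^2 D(\rho\tb)$ and then partitions the hexagon into three parallelograms, one in each coordinate pair $(s_1,s_2)$, $(s_2,s_3)$, $(s_3,s_1)$, each of the form $[0,1]\times[-1,0]$; each piece then factors as a product of two one-dimensional integrals with constant limits, so only one piece $I(1,2)$ need be computed and the cyclic symmetry plus $(t_1-t_2)+(t_2-t_3)+(t_3-t_1)=0$ does all the bookkeeping. You instead integrate over the whole hexagon in one chart, splitting at $s_2=0$ into two trapezoids with $s_2$-dependent inner limits, which is symmetry-blind until the final partial-fraction recombination but requires no clever choice of partition. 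What the paper's partition buys is reuse: the same three-parallelogram splitting is the engine of Proposition \ref{prop:FT2}, where the integral of a $\|\cdot\|_\sH$-radial function against the exponential is reduced to a one-dimensional integral against $E_\rho$, and that computation does not factorize as cleanly in your chart because $\|\sb\|_\sH=\max\{s_1,s_2\}$ is only simple on each parallelogram. Two points in your write-up are actually more careful than the paper: the explicit removable-singularity/continuity argument for coincident $t_i$ (the paper leaves this implicit), and the divergence-theorem remark, which is a genuinely distinct third route that makes the $S_3$-symmetry and the pairing of antipodal edges (hence the cosines) manifest from the start.
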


\begin{proof}
Let $D(\tb):= D_1(\tb)$. A simple change of variable shows that 
$$
    D_\rho (\tb) =  \rho^2 D (\rho \tb). 
$$ 
Hence, we only need to work with the case $\rho =1$. The hexagonal domain can 
be partitioned into three parallelograms, as shown in Figure 2, which leads to 
\begin{figure}[ht]
\centerline{
\includegraphics[width=0.5\textwidth]{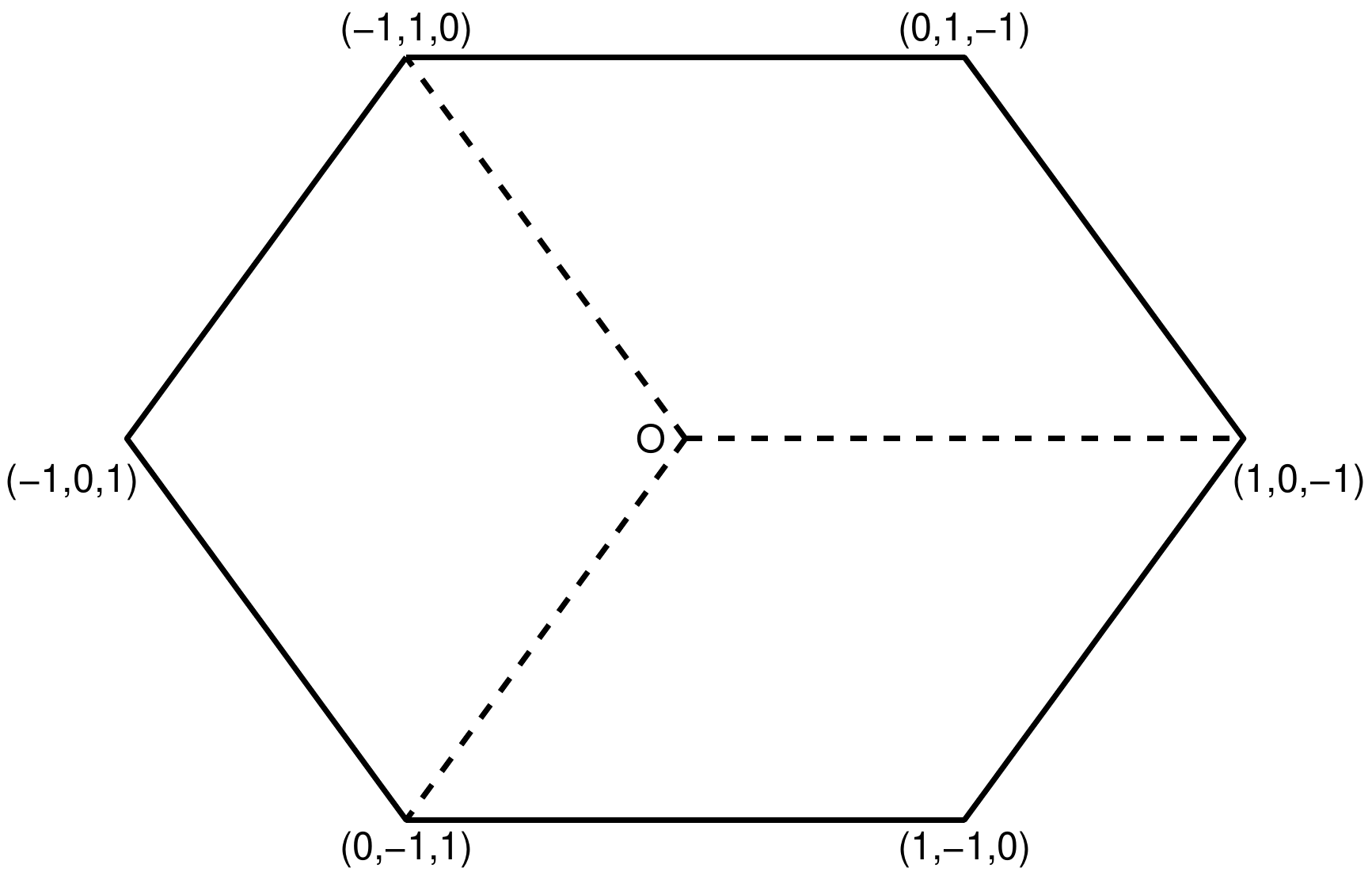}}
\caption{Hexagon and its partition in $\RR_{\sH}^3$.}
\end{figure}
\begin{align*}
  D(\tb) = \int_0^1 \int_{-1}^0 e^{\frac{2 i}{3}\sb \cdot \tb} ds_1 ds_2
         + \int_0^1 \int_{-1}^0 e^{\frac{2 i}{3}\sb \cdot \tb} ds_2 ds_3 +
          \int_0^1 \int_{-1}^0 e^{\frac{2 i}{3}\sb \cdot \tb} ds_3 ds_1.
\end{align*}
For $\sb, \tb \in \RR_H^3$, we can write $\sb \cdot \tb = (t_1-t_3)s_1 + (t_2-t_3)s_2$ 
by using $s_1+s_2+s_3 =0$, so that the first integral can be easily evaluated as 
$$
 I(1,2):= \int_0^1 \int_{-1}^0 e^{\frac{2 i}{3}\sb \cdot \tb} ds_1 ds_2 
    =  - \left(\frac{3}{2} \right)^2 \frac{\left(1- e^{\frac{2 i}{3}(t_1-t_3)}\right)
          \left (1- e^{- \frac{2 i}{3}(t_2-t_3)}\right) }{(t_1-t_3)(t_2-t_3)}. 
$$
The other two integrals are evidently just $I(2,3)$ and $I(3,1)$. The nominator of the last fraction in the right hand side of 
$I(1,2)$ can be written as 
$$
1- e^{-\frac{2 i}{3}(t_3-t_1)}- e^{-\frac{2 i}{3}(t_3-t_1)}- e^{-\frac{2 i}{3}(t_3-t_1)} + 2  \cos \left[\frac{2 }{3}(t_1-t_2) \right]. 
$$
Without the cosine term, the above expression is invariant under the permutation. Hence, using the fact that 
$(t_1-t_2) + (t_2-t_3)+(t_3-t_1) =0$, summing up $I(1,2) + I(2,3)+I(3,1)$ shows that 
\begin{equation*}
    D(\tb) = -  \frac{9}{2} \left[ \frac{ \cos \left[\frac{2 }{3} (t_1-t_2)\right]} {(t_2-t_3)(t_3-t_1)} +
              \frac{ \cos \left[\frac{2 }{3}  (t_2-t_3)\right]} {(t_3-t_1)(t_1-t_2)} +  
              \frac{ \cos \left[\frac{2 }{3}  (t_3-t_1)\right]} {(t_1-t_2)(t_2-t_3)}\right],
\end{equation*}  
which is the desired formula for $\rho =1$. This completes the proof. 
\end{proof}  

Along the same line of the proof, we have the following proposition on the Fourier transform of $\|\cdot\|_\sH$ radial  
functions. 

\begin{prop} \label{prop:FT2}
Let $\phi \in C_0(\RR_+)$. For any $r > 0$ and $\tb \in \RR_H^3$, 
\begin{equation} \label{eq:d-Drho}
 \int_{\|\sb\|_\sH \le r} e^{\pm \frac{2 i}{3} \sb \cdot \tb } \phi (\|\sb\|_\sH) d \sb = \int_0^r E_\rho (\tb) \phi(\rho) d\rho 
\end{equation}
for almost all $\tb$, where 
$$
   E_\rho(\tb) = \frac{d}{d\rho} D_\rho(\tb). 
$$
\end{prop}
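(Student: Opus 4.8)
The plan is to integrate in the radial variable $\rho=\|\sb\|_\sH$, peeling the ball $\{\|\sb\|_\sH\le r\}$ into the level curves of $\|\cdot\|_\sH$, and to recognize the radial density that appears as $E_\rho(\tb)$. First I would dispose of the ambiguous sign: since the ball $\{\|\sb\|_\sH\le\rho\}$ is symmetric under $\sb\mapsto-\sb$ (equivalently, since the closed form \eqref{eq:Drho} exhibits $D_\rho$ as a real sum of cosines), the substitution $\sb\mapsto-\sb$ gives $\int_{\|\sb\|_\sH\le\rho}e^{+\frac{2i}{3}\sb\cdot\tb}d\sb=\int_{\|\sb\|_\sH\le\rho}e^{-\frac{2i}{3}\sb\cdot\tb}d\sb=D_\rho(\tb)$, and the same substitution shows the full left-hand side is independent of the sign. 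Hence it suffices to fix $\tb$ and treat one sign, writing $g(\rho):=D_\rho(\tb)$ for the ball integral.

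The heart of the matter is a disintegration of Lebesgue measure along the level sets of the norm. Because $\|\cdot\|_\sH$ is homogeneous of degree one, writing $\sb=\rho\,\ub$ with $\rho=\|\sb\|_\sH\ge0$ and $\ub$ on the unit hexagon $\{\|\ub\|_\sH=1\}$ produces $d\sb=\rho\,d\rho\,d\mu(\ub)$ for a finite angular measure $\mu$ on that boundary curve. Equivalently, and more cleanly, I would push the complex measure $e^{\pm\frac{2i}{3}\sb\cdot\tb}\,d\sb$ forward under $\sb\mapsto\|\sb\|_\sH$; its distribution function on $[0,\infty)$ is by definition $g(\rho)=D_\rho(\tb)$. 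Since $\phi(\|\sb\|_\sH)$ is constant on each level set, the change of variables turns the left-hand side into the Riemann--Stieltjes integral $\int_{[0,r]}\phi(\rho)\,dg(\rho)$. Whenever $\rho\mapsto g(\rho)$ is absolutely continuous this equals $\int_0^r\phi(\rho)\,g'(\rho)\,d\rho=\int_0^r\phi(\rho)E_\rho(\tb)\,d\rho$, which is the assertion; the angular picture makes the density explicit, $E_\rho(\tb)=\rho\int_{\|\ub\|_\sH=1}e^{\pm\frac{2i}{3}\rho\,\ub\cdot\tb}\,d\mu(\ub)$.

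The step needing care, and the source of the qualifier ``for almost all $\tb$,'' is the passage from the Stieltjes integral to the Lebesgue integral, i.e. the differentiability of $\rho\mapsto D_\rho(\tb)$. The scaling relation $D_\rho(\tb)=\rho^2D(\rho\tb)$ from Proposition \ref{prop:D-kernel} reduces this to the smoothness of $D=D_1$; off the three lines $t_1=t_2$, $t_2=t_3$, $t_3=t_1$ the closed form \eqref{eq:Drho} is manifestly smooth, and these lines constitute a Lebesgue-null set in $\RR_\sH^3$, so $E_\rho(\tb)=\frac{d}{d\rho}D_\rho(\tb)$ is a bona fide continuous function of $\rho\in[0,r]$ for almost every $\tb$ and the Stieltjes integral collapses as claimed. (In fact the apparent singularities of \eqref{eq:Drho} on those lines are removable, since $D$ is the Fourier transform of a compactly supported function and hence smooth everywhere, so the identity persists there too; but establishing the a.e. statement only requires the null set to be discarded.) I expect this regularity bookkeeping, together with justifying the disintegration for the non-smooth norm $\|\cdot\|_\sH$ whose level curves carry corners, to be the only real obstacle; everything else is a direct change of variables.
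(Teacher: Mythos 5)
Your proof is correct, but it takes a genuinely different route from the paper. The paper proves \eqref{eq:d-Drho} by brute force: it reuses the three-parallelogram partition of Proposition \ref{prop:D-kernel}, notes that $\|\sb\|_\sH=\max\{s_1,s_2\}$ on each piece, splits each iterated integral according to which coordinate is larger, and carries out the inner integration explicitly; summing the three pieces produces the closed form \eqref{eq:Erho} for $E_\rho$, and only at the very end does it observe that this expression equals $\frac{d}{d\rho}D_\rho(\tb)$. You instead run the argument backwards: push the complex measure $e^{\pm\frac{2i}{3}\sb\cdot\tb}\,d\sb$ forward under $\sb\mapsto\|\sb\|_\sH$, recognize its distribution function as $g(\rho)=D_\rho(\tb)$ (real and sign-independent by the central symmetry of the ball), write the left-hand side as the Lebesgue--Stieltjes integral $\int_{[0,r]}\phi\,dg$, and collapse it using the regularity of $g(\rho)=\rho^2D(\rho\tb)$, which you get from the closed form \eqref{eq:Drho} off the null set $\{t_1=t_2\}\cup\{t_2=t_3\}\cup\{t_3=t_1\}$ (and in fact everywhere, by the Paley--Wiener-type smoothness of the Fourier transform of a compactly supported function). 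This is rigorous as you set it up --- the pushforward step needs no coarea formula or smoothness of the level curves, so the corners of the hexagon are a non-issue, and since the lines $t_i=t_j$ are scale-invariant, fixing $\tb$ off them makes $g$ of class $C^1$ on all of $[0,r]$, with $g(0)=0$. What your approach buys is brevity, generality (it works verbatim for any $1$-homogeneous norm once the radial regularity of the Dirichlet kernel is known), and a transparent explanation of the ``for almost all $\tb$'' qualifier, since $E_\rho$ is literally defined in the statement as $\frac{d}{d\rho}D_\rho$. What it does not deliver is the explicit sine formula \eqref{eq:Erho}, which the rest of the paper leans on heavily: it is used in Example \ref{ex:2.3}, in the partial-fraction B-spline representation of Proposition \ref{prop:E-kernel}, and in the computation of $J(\tb)$ in the proof of Theorem \ref{thm3}. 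That gap is easily closed in your framework by one further line --- differentiating \eqref{eq:Drho} term by term in $\rho$ (off the null lines) immediately reproduces \eqref{eq:Erho} --- and you implicitly acknowledge this, but you should state it, since the proposition is quoted later precisely through that explicit kernel.
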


\begin{proof}
As in the proof of Proposition \ref{prop:D-kernel}, we can split the integral into three pieces, 
$$
\int_{\|\sb\|_\sH \le r} e^{ \frac{2 i}{3} \sb \cdot \tb } \phi (\|\sb\|_\sH) d \sb = J(1,2) + J(2,3) + J(3,1),
$$
where, writing $\sb \cdot t = s_1 (t_1-t_3)+s_2(t_2-t_3)$ for $\tb \in \RR_\sH^3$, 
\begin{align*}
  J(1,2) := & \int_0^r \int_{-r}^0 e^{ \frac{2  i}{3} \sb \cdot \tb } \phi (\|\sb\|_\sH) d s_1 ds_2 \\
      =  & \ \int_0^r \int_0^{r} e^{ \frac{2  i}{3}  (s_1 (t_1-t_3)-s_2(t_2-t_3))} \phi (\|\sb\|_\sH) d s_1 ds_2,
\end{align*}
and $J(2,3)$ and $J(3,1)$ are permutations of $J(1,2)$. For $s_1, s_2 > 0$, $\|\sb\|_\sH = \max\{s_1,s_2\}$. Hence, it follows that
\begin{align*}
 J(1,2)  =  &\int_0^r  e^{ \frac{2  i}{3} s_1 (t_1-t_3) } \phi (s_1) d s_1 \int_0^{s_1}  e^{ - \frac{2  i}{3} s_2 (t_2-t_3) } ds_2 \\
    & + \int_0^r  e^{ - \frac{2  i}{3} s_2 (t_2-t_3) } \phi (s_2) d s_2 \int_0^{s_2}  e^{\frac{2  i}{3} s_1 (t_1-t_3) } ds_1 \\
     = &\frac{3}{2}\int_0^r \phi(\rho) \left[ \frac{e^{ \frac{2  i}{3} \rho (t_1-t_3)} - e^{\frac{2  i}{3} \rho (t_1-t_2)}}{i (t_2-t_3)} -
           \frac{e^{ \frac{2  i}{3} \rho (t_3-t_2)} -e^{ \frac{2  i}{3} \rho (t_1-t_2)}}{i (t_1-t_3)} \right] d\rho. 
\end{align*}
The terms inside the bracket can be rewritten as 
$$
  \frac{(t_1-t_3) e^{\frac{2  i}{3} \rho (t_1-t_3)}+(t_3-t_2) e^{\frac{2  i}{3} \rho (t_3-t_2)}
        - (t_1-t_2) e^{ \frac{2  i}{3} \rho (t_1-t_2)}}{i (t_2-t_3)(t_1-t_3)} 
$$
and the nominator of this expression can be further rewritten as 
\begin{align*}
(t_1-t_3) e^{\frac{2  i}{3} \rho (t_1-t_3)}+(t_3-t_2) e^{\frac{2  i}{3} \rho (t_3-t_2)}
       + (t_2-t_1) e^{ \frac{2  i}{3} \rho (t_2-t_1)} \\
         -  2i (t_1-t_2) \sin \left[\frac{2 }{3} \rho (t_1-t_2)\right],
\end{align*}
in which the sum in the first line is invariant under permutation. Consequently, adding $J(1,2)$, $J(2,3)$ and $J(3,1)$ gives
\begin{align*}
  \int_{\|\tb\|_\sH \le r} e^{ \frac{2  i}{3} \sb \cdot \tb } \phi (\|\sb\|_\sH) d \sb 
   =     \int_0^r \phi(\rho)  E_\rho(\tb)  d\rho
\end{align*}
where 
\begin{align}\label{eq:Erho}
 E_\rho(\tb) = & \ 3 \left[ \frac{(t_1-t_2) \sin \left[\frac{2} {3} \rho (t_1-t_2)\right]}{(t_2-t_3)(t_3-t_1)} \right. \\
       &  + \left. \frac{(t_2-t_3) \sin \left[\frac{2}{3} \rho (t_2-t_3)\right]}{(t_3-t_1)(t_1-t_2)}  
          +      \frac{(t_3-t_1) \sin \left[\frac{2}{3} \rho (t_3-t_1)\right]}{(t_1-t_2)(t_2-t_3)} \right]. \notag
\end{align}
It is now easy to see that $\frac{d}{d\rho} D_{\rho}(\tb) =  E_\rho(\tb)$ and \eqref{eq:d-Drho} follows. 
\end{proof}

The computation of the proof also shows that 
\begin{equation*} 
  \int_0^r \phi(\|\tb\|_\sH) d\tb = 6 \int_0^r \rho \phi(\rho)d\rho. 
\end{equation*}

This proposition allows us to compute the Fourier transform of $\|\cdot \|_\sH$ radial functions. 
 
\begin{exam}\label{ex:2.3}
For $a > 0$ and $\tb \in \RR_\sH^3$, let $\phi(\tb) = e^{- \frac{2 a}{3} \|\tb\|_H}$. Then
\begin{align*}
  \wh \phi (\tb)   = &  \frac{ 27 a^2( 2a^2 + t_1^2+t_2^2+t_3^2)} 
       {4   (a^2+ (t_1-t_2)^2)(a^2+ (t_2-t_3)^2)(a^2+ (t_3-t_1)^2)}.
\end{align*}
\end{exam}

\begin{proof}
We use the explicit formula of $E_\rho$ in \eqref{eq:Erho} and the elementary integral
$$
  \int_0^\infty e^{-a \rho} \sin (b \rho) d\rho = \frac{b}{a^2+b^2}, \qquad a > 0,
$$
then simplify the computation using $t_1 t_2+t_2t_3 + t_3 t_1 = - (t_1^2+t_2^2+t_3^2)/2$, which 
comes from $(t_1+t_2+t_3)^2 =0$. 
\end{proof}

For $\delta > 0$, the Ces\`are $(C,\delta)$ means of a function $s: \RR_+ \mapsto \CC$ are defined by
$$
  s^\d(\rho) =  \frac{\d}{\rho} \int_0^\rho (\rho - u)^{\d-1} s(u) du, \qquad \rho > 0.
$$
Because of the the convolution structure of the partial integral \eqref{eq:Srho}, its $(C,\delta)$ means
can be viewed as convolving the function $f$ with the $(C,\delta)$ means of the Dirichlet kernel; that is, 
define 
$$
  D_R^\d (\tb):= \frac{\d}{R^\d} \int_0^R (R - \rho)^{\d -1} D_\rho(\tb) d\rho,
$$
then the $(C,\delta)$ means of the integral in \eqref{eq:Srho} is defined by 
$$
   \s_R^\d (f;\tb) = (f \ast D_R^\d)(\tb), \qquad  R > 0, \quad \tb \in \RR_H^3.
$$
Our next result shows that the Ces\`aro $(C,\delta)$ means and the Reize $(R,\delta)$ means, defined in
\eqref{eq:R-H-means}, of hexagonal summability of the Fourier integral are identical. 

\begin{cor} \label{cor:C=R}
Let $f \in L^1(\RR_H^2)$ and $\delta > 0$. Then for any $r>0$, 
\begin{align*}
   S_{R,\delta}f(\tb) = \ & \int_{\|\sb\|_\sH \le 1} \left(1-\frac{\|\sb\|_\sH}{R}\right)^\delta e^{ \frac{i}{3} \sb\cdot\tb} \wh f(\sb)d\sb \\
       =  \ & \frac{\d}{R^\d} \int_0^R (R -\rho)^{\d-1} \s_\rho (f; \tb) d\rho = (f*D_R^\d)(\tb), \qquad \tb \in \RR_H^3.
\end{align*}
\end{cor}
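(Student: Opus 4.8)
The middle expression is just the definition of the $(C,\d)$ mean, so the statement amounts to two identities, and the plan is to obtain each by an application of Fubini's theorem.

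\emph{First identity (Riesz $=$ Cesàro).} Here I would start from the definition of the Cesàro mean and insert the representation $\s_\rho(f;\tb)=\int_{\|\sb\|_\sH\le\rho}\wh f(\sb)e^{\frac{2i}{3}\sb\cdot\tb}d\sb$ from \eqref{eq:Srho}. Because $f\in L^1(\RR_\sH^3)$ forces $\wh f$ to be bounded, the region $\{\|\sb\|_\sH\le R\}$ is compact, and $(R-\rho)^{\d-1}$ is integrable on $[0,R]$ for $\d>0$, the resulting double integral in $(\rho,\sb)$ converges absolutely and Fubini applies. Interchanging the order, for fixed $\sb$ the variable $\rho$ runs over $[\|\sb\|_\sH,R]$, and the elementary integral
$$\f{\d}{R^\d}\int_{\|\sb\|_\sH}^R (R-\rho)^{\d-1}d\rho=\left(1-\f{\|\sb\|_\sH}{R}\right)^\d$$
reproduces exactly the Riesz factor, giving $S_{R,\d}f(\tb)$.

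\emph{Second identity (Cesàro $=$ convolution).} For this I would again use $\s_\rho(f;\tb)=(f*D_\rho)(\tb)$ and move the $\rho$-average inside the convolution. The interchange is legitimate because $D_\rho$ is uniformly bounded: from its definition $|D_\rho(\ub)|\le\int_{\|\sb\|_\sH\le\rho}d\sb=3\rho^2$, so with $f\in L^1$ one has $\int_0^R(R-\rho)^{\d-1}\int_{\RR_\sH^3}|f(\tb-\ub)|\,|D_\rho(\ub)|\,d\ub\,d\rho\le 3\|f\|_1\int_0^R(R-\rho)^{\d-1}\rho^2\,d\rho<\infty$. After interchanging, the bracketed $\rho$-integral of $D_\rho$ is precisely $D_R^\d(\ub)=\f{\d}{R^\d}\int_0^R(R-\rho)^{\d-1}D_\rho(\ub)d\rho$, so the Cesàro mean equals $(f*D_R^\d)(\tb)$.

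The proof involves no real obstacle: both interchanges are controlled by the trivial bounds $\wh f\in L^\infty$ and $\|D_\rho\|_\infty\le 3\rho^2$ together with the integrability of $(R-\rho)^{\d-1}$, and the only genuine computation is the one-line primitive $\int_{\|\sb\|_\sH}^R(R-\rho)^{\d-1}d\rho=\d^{-1}(R-\|\sb\|_\sH)^\d$. The one point to keep an eye on is that both Fubini applications should be phrased as absolute-convergence (Tonelli) statements before the oscillatory factor $e^{\frac{2i}{3}\sb\cdot\tb}$ and the sign of $D_\rho$ are reintroduced, which is why \emph{boundedness} rather than integrability of the kernels is the relevant input.
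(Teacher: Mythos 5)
Your proof is correct, but it takes a different route from the paper. The paper works entirely at the kernel level: it starts from the representation $\int_{\|\sb\|_\sH \le R} \phi(\|\sb\|_\sH) e^{\frac{2i}{3}\sb\cdot\tb} d\sb = \int_0^R E_\rho(\tb)\phi(\rho)\,d\rho$ of Proposition \ref{prop:FT2}, integrates by parts to get $\phi(R)D_R(\tb) - \int_0^R \phi'(\rho)D_\rho(\tb)\,d\rho$ for locally absolutely continuous $\phi \in C_0(\RR_+)$, and then specializes to $\phi(t) = (1-t/R)_+^\d$ to obtain the kernel identity $\int_{\|\sb\|_\sH\le R}(1-\|\sb\|_\sH/R)^\d e^{\frac{2i}{3}\sb\cdot\tb}d\sb = D_R^\d(\tb)$, after which convolving with $f$ finishes. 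You instead work directly at the level of the means, replacing the integration by parts with two Tonelli--Fubini interchanges and the elementary primitive $\frac{\d}{R^\d}\int_{\|\sb\|_\sH}^R(R-\rho)^{\d-1}d\rho = (1-\|\sb\|_\sH/R)^\d$; your justifications ($\wh f \in L^\infty$ since $f\in L^1$, compactness of $\{\|\sb\|_\sH\le R\}$, and $\|D_\rho\|_\infty \le 3\rho^2$, which is exactly the hexagon's measure and consistent with the paper's formula $\int_{\|\tb\|_\sH\le\rho}d\tb = 6\int_0^\rho u\,du$) are all sound, so both interchanges are legitimate. The two arguments encode the same underlying fact --- that $(1-t/R)^\d$ is the $(C,\d)$ average of the indicator $\chi_{[t,R]}$ --- but your version is more elementary and self-contained, needing neither Proposition \ref{prop:FT2} nor the kernel $E_\rho$, while the paper's version buys a reusable identity valid for arbitrary locally absolutely continuous $\phi$ (which is also what Theorem \ref{thm2}'s proof later exploits via \eqref{eq:d-Drho}). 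Your closing remark that the interchanges must be set up as absolute-convergence statements before reintroducing the oscillatory factor is exactly the right care to take, and it is a point the paper's proof glosses over by working with the kernel first.
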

 
\begin{proof}
Let $\phi \in C_0(\RR_+)$ be locally absolutely continuous. Integration by parts in the right hand side of \eqref{eq:d-Drho} shows
that 
$$
 \int_{\|\sb\|_\sH \le R} \phi(\|\sb\|_\sH) e^{ \frac{i}{3} \sb\cdot\tb} d\sb = \phi(R)D_{R}(\tb) - \int_0^R  \phi'(\rho) D_\rho(\tb) d\rho.    
$$
Setting $\phi(t) = (1-t/R)_+^\d$ for $t > 0$ and $\d > 0$, this identity becomes
$$
 \int_{\|\sb\|_\sH \le R} \left(1 - \frac{\|\sb\|_\sH}{R} \right)^\d  e^{ \frac{i}{3} \sb\cdot\tb} d\sb
    = \frac{\d}{R^\d} \int_0^R (R-\rho)^{\d-1} D_\rho(\tb) d\rho. 
$$ 
Taking the convolution with $f$ proves the corollary. 
\end{proof}
 
By the close form of the Diriclet kernel in \eqref{eq:Drho}, we immediately conclude that 
\begin{equation} \label{C-kernel}
    D_R^\d (\tb) = - \frac{9}{2} \left[ \frac{ F_\d \left(\frac23 R (t_1-t_2)\right) } {(t_2-t_3)(t_3-t_1)} +
      \frac{F_\d \left(\frac23 R (t_2-t_3)\right) } {(t_3-t_1)(t_1-t_2)} +  \frac{ F_\d \left(\frac23 R (t_3-t_1)\right) } {(t_1-t_2)(t_2-t_3)}\right],
\end{equation}
where 
$$
   F_\d (u): = \d \int_0^1 \cos \left ( \rho u\right) (1-\rho)^{\d-1} d \rho, \quad u > 0. 
$$
Elementary computation shows that $F_\d$ can be written as a ${}_1F_2$ series
$$
   F_\d(t) = {}_1F_2(1; \tfrac{\d+1}{2}, \tfrac{\d+2}{2}; - \tfrac{t^2}{4}).
$$
For some special values of $\d$, $F_\d$ enjoys compact expression. For example, 
\begin{equation} \label{F2} 
   F_1(t) = \frac{\sin t}{t} \quad \hbox{and} \quad F_2(t) = \frac{2(1- \cos t)}{t^2}.
\end{equation}
The kernel $D_R^\d$ turns out to be positive for $\d \ge 2$. More precisely, we prove the following theorem. 
\begin{thm} 
The kernel $D_R^\d(\tb)$ is nonnegative on $\RR^3_H$ if, and only if, $\delta \ge 2$. 
\end{thm}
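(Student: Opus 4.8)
The plan is to pass to the difference variables $\xi=\tfrac23R(t_1-t_2)$, $\eta=\tfrac23R(t_2-t_3)$, $\zeta=\tfrac23R(t_3-t_1)$, which satisfy $\xi+\eta+\zeta=0$. Writing $G(u)=uF_\d(u)$ (an odd, entire function) and clearing denominators in \eqref{C-kernel} turns the closed form into
$$
  D_R^\d(\tb) = -2R^2\,\frac{G(\xi)+G(\eta)+G(\zeta)}{\xi\eta\zeta}.
$$
Since $G$ is odd, the numerator vanishes whenever any one of $\xi,\eta,\zeta$ is $0$, so the quotient extends to an entire symmetric function and the asserted positivity is equivalent to the statement that $G(\xi)+G(\eta)+G(\zeta)$ has sign opposite to $\xi\eta\zeta$ on the plane $\xi+\eta+\zeta=0$. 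I would prove the two implications separately: sufficiency of $\d\ge2$ by a monotonicity-in-$\d$ argument that reduces everything to the single value $\d=2$, and necessity by analyzing the large-argument asymptotics of $F_\d$.

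For sufficiency, I would first record that the Ces\`aro structure behind \eqref{C-kernel} exhibits higher-order kernels as nonnegative averages of lower-order ones. Combining the Beta-integral identity $(R-u)^{\d-1}=\frac1{B(2,\d-2)}\int_u^R(R-\rho)^{\d-3}(\rho-u)\,d\rho$ with the definition of $D_R^\d$ as a Ces\`aro mean of $D_\rho$, a Fubini interchange yields, for $\d>2$,
$$
  D_R^\d(\tb)=\frac{c_\d}{R^\d}\int_0^R (R-\rho)^{\d-3}\rho^2\,D_\rho^2(\tb)\,d\rho,\qquad c_\d>0.
$$
As the weight $(R-\rho)^{\d-3}\rho^2$ is nonnegative on $[0,R]$, positivity of $D_\rho^2$ for every $\rho$ forces positivity of $D_R^\d$ for every $\d>2$, so it suffices to treat the base case $\d=2$. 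There I would insert $F_2(t)=2(1-\cos t)/t^2$, giving $G(u)=4\sin^2(u/2)/u$; after substituting $\alpha=\xi/2,\ \beta=\eta/2,\ \gamma=\zeta/2$ (still summing to $0$) the desired inequality reduces to $h(\alpha)+h(\beta)+h(\gamma)$ and $\alpha\beta\gamma$ having opposite signs, where $h(x)=\sin^2x/x$. Since $h$ is odd, in the generic configuration $\alpha,\beta>0>\gamma=-(\alpha+\beta)$ this is exactly the subadditivity $h(\alpha+\beta)\le h(\alpha)+h(\beta)$, which I would prove from $|\sin(\alpha+\beta)|\le|\sin\alpha|+|\sin\beta|$ together with the elementary estimate
$$
  \frac{\sin^2\alpha+\sin^2\beta+2|\sin\alpha\sin\beta|}{\alpha+\beta}\le\frac{\sin^2\alpha}{\alpha}+\frac{\sin^2\beta}{\beta},
$$
whose difference, after clearing $\alpha+\beta$, collapses to $\tfrac{\beta}{\alpha}\sin^2\alpha+\tfrac{\alpha}{\beta}\sin^2\beta\ge2|\sin\alpha\sin\beta|$, i.e. to AM--GM.

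For necessity I would examine $D_R^\d(\tb)$ as $\tb\to\infty$ along a generic ray, using the expansion
$$
  F_\d(u)=\frac{\d(\d-1)}{u^2}+\Gamma(\d+1)\,u^{-\d}\cos\!\Big(u-\tfrac{\pi\d}{2}\Big)+O(u^{-2-\ve}),
$$
obtained by splitting the defining integral at the endpoints $\rho=0$ (which produces the non-oscillatory $u^{-2}$ term) and $\rho=1$ (whose $(1-\rho)^{\d-1}$ singularity produces the oscillatory $u^{-\d}$ term). Feeding this into the representation for $D_R^\d$, the non-oscillatory part of the kernel is a positive multiple of $\frac{\xi^2+\eta^2+\zeta^2}{(\xi\eta\zeta)^2}$, of size $N^{-4}$ when $\xi,\eta,\zeta\sim N$, while the oscillatory part has size $N^{-2-\d}$ and changes sign. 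For $\d<2$ one has $N^{-2-\d}\gg N^{-4}$, so along a suitable sequence $\tb\to\infty$ the oscillatory term dominates and drives $D_R^\d$ negative; this shows $\d\ge2$ is necessary and hence sharp.

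The routine steps are the change of variables, the Beta-integral averaging, and the asymptotic bookkeeping. The genuinely delicate point is the endpoint case $\d=2$: the natural integral representation of $G(\xi)+G(\eta)+G(\zeta)$ has an integrand that is not of one sign, so pointwise positivity is unavailable and one must exploit cancellation; the subadditivity of $\sin^2x/x$, reduced to a single application of AM--GM, is the crux that resolves this. On the necessity side the corresponding subtlety is pinning down the coefficient $\d(\d-1)$ of the non-oscillatory term precisely enough that the comparison of the exponents $-2-\d$ and $-4$ lands exactly at the threshold $\d=2$.
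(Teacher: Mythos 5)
Your proposal is correct, and while it shares the paper's overall skeleton (reduce sufficiency to $\d=2$ by Ces\`aro averaging; refute positivity for $\d<2$ via properties of $F_\d$), both delicate steps are handled by genuinely different arguments. Your Beta-integral identity is exactly the paper's relation $D^{\d+\mu}_R (\tb) = \frac{\Gamma(\d+\mu+1)}{\Gamma(\d+1)\Gamma(\mu)R^{\d+\mu}} \int_0^R (R - \rho)^{\mu-1} \rho^\d D_\rho^\d (\tb)\, d\rho$ with $\d=2$, $\mu=\d-2$, so that part is the same. At the base case $\d=2$, however, the paper writes the relevant quantity $G(\tb)$ as an explicit sum of two squares, a formula the author admits was hard to find and whose verification is delegated to a computer algebra system; your route --- difference variables $\xi+\eta+\zeta=0$, the identity $D_R^\d(\tb)=-2R^2\,[G(\xi)+G(\eta)+G(\zeta)]/(\xi\eta\zeta)$ with $G(u)=uF_\d(u)$, and then, since $G(u)=4\sin^2(u/2)/u$ for $\d=2$, the subadditivity of $h(x)=\sin^2 x/x$ on $(0,\infty)$ proved from $|\sin(\alpha+\beta)|\le|\sin\alpha|+|\sin\beta|$ plus AM--GM --- is fully hand-checkable and conceptually cleaner; this is a genuine improvement over the paper's CAS-verified identity. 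For necessity the paper is far more finitary: it first reduces to $1<\d<2$ (nonnegativity at some $\d\le1$ would propagate upward by the same averaging) and then evaluates the kernel at the single point $\tb=(3\pi/R,-3\pi/R,0)$, where \eqref{C-kernel} yields $D_R^\d(\tb)=\frac{R^2}{2\pi^2}[F_\d(2\pi)-F_\d(4\pi)]$, shown negative by integration by parts and the symmetrization $s\mapsto 1-s$. Your endpoint-asymptotic argument (the $\d(\d-1)u^{-2}$ term from $\rho=0$ against the oscillatory $\Gamma(\d+1)u^{-\d}\cos(u-\pi\d/2)$ term from $\rho=1$; note your expansion is exact at $\d=1$ and $\d=2$, a good sanity check) is also sound and buys an explanation of \emph{why} the threshold is exactly $2$, namely the exponent crossover $-2-\d$ versus $-4$, at the price of more bookkeeping. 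Two small repairs you should make: your claim that the non-oscillatory part is a \emph{positive} multiple of $(\xi^2+\eta^2+\zeta^2)/(\xi\eta\zeta)^2$ requires $\d>1$, since $\d(\d-1)\le 0$ for $\d\le 1$; either handle $0<\d\le 1$ separately (there the oscillatory $u^{-\d}$ term dominates even more strongly) or, as the paper does, use the averaging monotonicity --- which your own Beta identity already provides --- to reduce necessity to $1<\d<2$ at the outset. And the final "suitable sequence" deserves one explicit line, e.g.\ $\xi=\eta=N$, $\zeta=-2N$, where the oscillatory contribution $2N^{1-\d}\cos(N-\tfrac{\pi\d}{2})-(2N)^{1-\d}\cos(2N-\tfrac{\pi\d}{2})$ visibly dips below $-cN^{1-\d}$ along a sequence $N\to\infty$, overwhelming both the $O(N^{-1})$ main term and the error terms when $\d<2$.
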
 

\begin{proof}
First we prove that $D_R^\d(\tb) \ge 0$ if $\delta \ge 2$. For $\d, \mu >0$ it is easy to verify that 
$$
D^{\d+\mu}_R (\tb) = \frac{\Gamma(\d+\mu+1)}{\Gamma(\d+1)\Gamma(\mu)} 
    \frac{1}{R^{\d+\mu}} \int_0^R (R - \rho)^{\mu-1} \rho^\d \,D_\rho^\d (\tb) d\rho,
\qquad  R > 0.
$$
Thus, it follows that $D^{\d+\mu}_R(\tb)$ is nonnegative if $D^{\d}_R(\tb)$ is for all $R$. Hence, it suffices to show 
that $D_R^2(\tb)$ is nonnegative for $\tb \in \RR_\sH^3$ and $R>0$.  Using the close form \eqref{C-kernel} of $D_R^\delta$
and the explicit formula of $F_2$ in \eqref{F2}, it follows readily that $D_R^2(\tb) \ge 0$ if 
\begin{align*}
    G_R(\tb) : =& - (t_2-t_3)(t_3-t_1)\left(1-\cos [\tfrac{2}{3}R(t_1-t_2)]\right) \\
      &   - (t_3-t_1)(t_1-t_2)\left(1-\cos [\tfrac{2}{3}R(t_2-t_3)]\right) \\
      &   - (t_1-t_2)(t_2-t_3)\left(1-\cos [\tfrac{2}{3}R(t_3-t_1)]\right)
\end{align*} 
is nonnegative. Evidently, it suffices to establish the non-negativity of $G_R(\tb)$ when $ \frac{2 }{3} R = 1$, which we 
denote by $G(\tb)$. It turns out that $G(\tb)$ can be written as a sum of square, from which the nonnegativity of $G(\tb)$ 
follows immediately. Indeed, the following identity holds, 
\begin{align*}
   G(\tb) = &  \frac{1}{2} \left[ (t_1 - t_2) \cos t_3 + (t_2 - t_3) \cos t_1 + (t_3 - t_1) 
     \cos t_2 \right]^2 \\
      & + \frac{1}{2} \left[ (t_1 - t_2) \sin t_3 + (t_2 - t_3) \sin t_1 + (t_3 - t_1) 
        \sin t_2\right]^2. 
\end{align*}
The difficult lies in identifying the formula. The verification is tedious but straightforward, and it can be checked by
a computer algebra system. This proves the positivity of $D_R^2(\tb)$. 

Next we prove that $D_R^\d(\tb)$ is not nonnegative when $0 < \d < 2$. We only need to consider the case
$1< \delta < 2$, since if $D_R^\d(\tb)$ is nonnegative for some $\delta$ that satisfies $0 < \delta \le 1$, then it has to be 
nonnegative for $1 < \d < 2$. Assume $1 < \d <2$. It suffices to show that $D_R^\d(\tb)$ 
is negative for some $\tb \in \RR^3_H$.  Using the explicit formula of
the \eqref{C-kernel}, it is easy to see that
$$
   D_R^\d \left(\frac{3 \pi}{R}, - \frac{3\pi}{R}, 0 \right) = \frac{ R^2}{2 \pi^2} 
        \left[F_\d(2 \pi) - F_\d (4 \pi) \right]. 
$$
Integrating by parts shows that 
\begin{align*}
   F_\d(2 \pi ) - F_\d(4 \pi) & =  \frac{\d(\d-1)}{4 \pi} \int_0^1 (1-s)^{\d -2} 
            [2 \sin (2\pi s)- \sin (4 \pi s) ] ds \\
         & =  \frac{\d(\d-1)}{2 \pi} \int_0^1 (1-s)^{\d -2} 
               \sin (2\pi s) [1-  \cos (2\pi s) ]  ds.
\end{align*}
Splitting the last integral as two, one over $[0,1/2]$ and the other over $[1/2,1]$,
and changing variable $t \mapsto 1-s$ in the second integral, we see that  
$$
  F_\d(2 \pi ) - F_\d(4 \pi)   =  \frac{\d(\d-1)}{2 \pi}
    \int_0^{1/2}  \left[ (1-s)^{\d -2} -  s^{\d-2} \right]\sin (2\pi s)[1-  \cos (2\pi s) ] ds. 
$$
Since for $0<s < 1/2$, $\sin (2\pi s)[1-  \cos (2\pi s) ]\ge 0$ and $(1-s)^{\d -2} 
-  s^{\d-2} < 0$ as $\d -2 < 0$, we conclude that $F_\d(2 \pi) - F_\d(4 \pi) < 0$ 
for $1 < \d < 2$. Consequently $D_R^\d \left(\frac{3\pi}{R}, - \frac{3 \pi}{R}, 0 \right)$ 
is negative for $1 < \d<2$. 
\end{proof} 

\begin{cor}
For $\d \ge 2$, the  Ces\`aro $(C,\d)$ means $\s_R^\d(f)$ define positive linear transformations on $L^1(\RR^2)$; 
the order of summability to assure positivity is best possible. 
\end{cor}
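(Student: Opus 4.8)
The plan is to read off both the positivity and its sharpness directly from the preceding theorem on the sign of $D_R^\d$, together with the convolution representation of Corollary~\ref{cor:C=R}. By that corollary the Ces\`aro means are realized as the convolution $\s_R^\d(f;\tb) = (f \ast D_R^\d)(\tb)$, so $f \mapsto \s_R^\d(f)$ is a convolution operator with kernel $D_R^\d$. The whole statement then reduces to the elementary principle that a convolution operator preserves nonnegativity exactly when its kernel is nonnegative almost everywhere.

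For the sufficiency, suppose $\d \ge 2$. The preceding theorem gives $D_R^\d(\tb) \ge 0$ for every $\tb \in \RR_\sH^3$ and every $R > 0$; being nonnegative with finite integral, $D_R^\d \in L^1(\RR_\sH^3)$, so the convolution is a bounded operator on $L^1$. Then for any $f \in L^1(\RR_\sH^3)$ with $f \ge 0$,
$$
  \s_R^\d(f;\tb) = \int_{\RR_\sH^3} f(\sb)\, D_R^\d(\tb - \sb)\, d\sb \ge 0,
$$
so $\s_R^\d$ is a positive linear transformation on $L^1(\RR^2) \cong L^1(\RR_\sH^3)$.

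For the sharpness, suppose $0 < \d < 2$. The proof of the preceding theorem exhibits a point at which $D_R^\d$ is strictly negative, namely $D_R^\d\!\left(\frac{3\pi}{R}, -\frac{3\pi}{R}, 0\right) < 0$. Since the apparent singularities of $D_R^\d$ along the diagonals $t_i = t_j$ are removable, just as for $D_\rho$ itself, the kernel $D_R^\d$ is continuous on $\RR_\sH^3$, so the set $U = \{\tb : D_R^\d(\tb) < 0\}$ is open, nonempty, hence of positive Lebesgue measure. Fixing $\tb_0$ with $D_R^\d(\tb_0) < 0$, the map $\sb \mapsto D_R^\d(\tb_0 - \sb)$ is negative on the nonempty open set $\tb_0 - U$; choosing a nonnegative, nonzero $f \in L^1(\RR_\sH^3)$ supported in a small ball inside $\tb_0 - U$ then gives $\s_R^\d(f;\tb_0) = \int_{\RR_\sH^3} f(\sb)\, D_R^\d(\tb_0 - \sb)\, d\sb < 0$. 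Hence $\s_R^\d$ is not positive for $\d < 2$, which proves that $\d \ge 2$ is necessary and the order is best possible.

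The substance of this corollary is carried entirely by the kernel positivity theorem, so there is no deep obstacle in the argument itself. The only step deserving care is the sharpness direction, where one must upgrade the single pointwise negativity of the kernel to an actual nonnegative $L^1$ counterexample; the localization above achieves this, and its one real input is the continuity of $D_R^\d$ (equivalently, that its negative set is open of positive measure).
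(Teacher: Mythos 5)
Your proposal is correct and takes essentially the same route as the paper, which treats this corollary as an immediate consequence of the kernel theorem together with the convolution identity $\s_R^\d(f)=f*D_R^\d$ from Corollary~\ref{cor:C=R}; your bump-function localization for sharpness just makes explicit the routine step (continuity of $D_R^\d$, hence an open negativity set) that the paper leaves unstated. One cosmetic slip: the explicit negative value at $\left(\frac{3\pi}{R},-\frac{3\pi}{R},0\right)$ is verified in the paper only for $1<\d<2$, with $0<\d\le 1$ handled by the subordination reduction, but since your localization needs only the existence of some point where $D_R^\d<0$ --- which the theorem provides for every $0<\d<2$ --- the argument is unaffected.
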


By Corollary \ref{cor:C=R}, this also proves Theorem 1.1. As in the case of $\ell_1$, the positivity of the kernel and its
proof are motivated by the corresponding result on the summability of the Fourier series. Indeed, it was proved in 
\cite{X10} that the Ces\`aro  $(C,\delta)$ means of the Fourier series associated with the hexagonal lattice are 
nonnegative if $\delta \ge 2$. 

\section{Positive definite hexagonal radial functions}
\setcounter{equation}{0}

In this section we consider hexagonal invariant functions that depend only on $\|\cdot\|_{\sH}$, which we call
$\|\cdot\|_{\sH}$ radial functions.  We start with the proof of Theorem \ref{thm2}, which we restate below, that 
gives a sufficient condition for a $\|\cdot\|_\sH$ radial function to be positive definite.

\begin{thm}\label{thm3}
Let $\phi \in C_b(\RR_+)$. The function $\phi(\|\tb\|_\sH)$, $\tb \in \RR_{\sH}^3$, is positive definite on $\RR_{\sH}^3$ 
if there exists an increasing bound function $\a$ on $\RR_+$ such that 
\begin{equation} \label{eq:m_H}
 \phi(t) = \int_0^\infty  m_2(t u)  d \a(u) \quad \hbox{with} \quad m_2(\xi) = \int_\xi^\infty \frac{\sin (u)}{u} du. 
\end{equation}
\end{thm}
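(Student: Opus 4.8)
The plan is to prove the theorem in two stages: first reduce the integral representation \eqref{eq:m_H} to the single profile $m_2$, and then establish positive definiteness of $m_2(\|\cdot\|_\sH)$ by computing its Fourier transform and checking that it is nonnegative.

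For the reduction I would test the positive-definiteness quadratic form directly. Given points $\tb_1,\dots,\tb_N\in\RR_\sH^3$ and scalars $c_1,\dots,c_N$, the representation \eqref{eq:m_H} gives
\[
  \sum_{j,k} c_j\overline{c_k}\,\phi(\|\tb_j-\tb_k\|_\sH)
   = \int_0^\infty \Bigl[\,\sum_{j,k} c_j\overline{c_k}\, m_2\bigl(u\|\tb_j-\tb_k\|_\sH\bigr)\Bigr]\, d\a(u),
\]
the interchange being legitimate since $m_2$ is bounded ($|m_2|\le m_2(0)=\pi/2$) and $\a$ has finite total variation. Because $d\a\ge0$, it suffices to show the bracketed sum is nonnegative for each fixed $u>0$. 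As $u\|\tb\|_\sH=\|u\tb\|_\sH$ and $\tb\mapsto u\tb$ is an invertible linear self-map of $\RR_\sH^3$, that sum is exactly the positive-definiteness form of $m_2(\|\cdot\|_\sH)$ evaluated at the dilated points $u\tb_j$. Hence everything reduces to the single claim that $\Phi(\tb):=m_2(\|\tb\|_\sH)$ is positive definite on $\RR_\sH^3$.

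By Bochner's theorem it is enough to realize $\Phi$ as the Fourier integral of a nonnegative measure, i.e. with the normalization \eqref{eq:FT} to show $\wh\Phi\ge0$. To compute $\wh\Phi$ I would invoke Proposition \ref{prop:FT2}: since $m_2'(\rho)=-\sin\rho/\rho$ and $E_\rho=\frac{d}{d\rho}D_\rho$, an integration by parts in \eqref{eq:d-Drho} (with $r\to\infty$) converts the radial Fourier transform of $\Phi$ into a positive multiple of $\int_0^\infty D_\rho(\sb)\,\tfrac{\sin\rho}{\rho}\,d\rho$. Substituting the closed form \eqref{eq:Drho} of $D_\rho$ and applying the classical Dirichlet integral
\[
  \int_0^\infty \frac{\sin\rho\,\cos(a\rho)}{\rho}\,d\rho=\tfrac{\pi}{2}\,\chi_{\{|a|<1\}},
\]
each of the three cosine terms (with $a=\tfrac23(s_i-s_j)$) collapses to the characteristic function $\chi_{\{|s_i-s_j|<3/2\}}$, so that $\wh\Phi(\sb)$ becomes a piecewise-constant expression built from these characteristic functions divided by the products $(s_i-s_j)(s_j-s_k)$.

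Nonnegativity then follows from an elementary case analysis. Ordering $s_1\ge s_2\ge s_3$ and setting $a=s_1-s_2\ge0$, $b=s_2-s_3\ge0$, the three denominators are $-b(a+b)$, $-a(a+b)$, and $ab$; the partial-fraction identity $\tfrac{1}{b(a+b)}+\tfrac{1}{a(a+b)}=\tfrac{1}{ab}$ shows that $\wh\Phi$ vanishes identically on the inner region $\{a+b<\tfrac32\}$ and is strictly positive outside it, whence $\wh\Phi\ge0$ everywhere. The genuine obstacle is analytic, not algebraic: $m_2(\|\cdot\|_\sH)$ fails to lie in $L^1(\RR_\sH^3)$ (it decays only like $1/\|\tb\|_\sH$), so the radial transform and the integration by parts are only conditionally convergent. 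I would make this rigorous by inserting an Abel regularizer $e^{-\ve\rho}$ (equivalently, working through the Cesàro/Riesz means whose kernels were already shown to be nonnegative), evaluating the regularized transform, and letting $\ve\to0^+$, verifying that the boundary terms vanish and that the limit is the nonnegative piecewise-constant function above; the coincidence loci $s_i=s_j$ are removable singularities of $D_\rho$ and present no difficulty.
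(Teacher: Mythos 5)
Your proposal is correct and follows essentially the same route as the paper's own proof: Bochner's theorem combined with the kernel identity of Proposition \ref{prop:FT2}, reduction by dilation to the single profile $m_2$, evaluation of a Dirichlet-type integral into characteristic functions of $\{|s_i-s_j|<c\}$, and a sign analysis of the resulting piecewise-constant function --- your integration by parts back to $D_\rho$ plus the classical Dirichlet integral is equivalent to the paper's direct computation of $\int_0^\infty \sin(u\rho)\,m_2(\rho)\,d\rho$, your ordering-plus-partial-fraction case check is a cleaner version of the paper's region-by-region argument on $E_{---}$, $E_{--+}$, $E_{-++}$, $E_{+++}$, and your Abel-regularization caveat is in fact more careful about the conditional convergence than the paper, which passes over it silently. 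One harmless slip: the transform is not ``strictly positive outside'' the inner region, since it also vanishes wherever all three differences $|s_i-s_j|$ exceed the threshold (which is why its support is the small hexagonal ``spider'' of Figure 3), but nonnegativity --- all that is needed --- is exactly what your case analysis establishes.
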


\begin{proof}
By Bochner's theorem, $\phi(\|\tb\|_\sH)$ is positive definite exactly when it is the Fourier transform of a nonnegative,
integrable function, say $\Phi$, on $\RR_\sH^3$. By \eqref{eq:d-Drho}, we need to show that 
$$
  \Phi(\tb) = \int_0^\infty \phi(\rho)E_\rho(\tb) d\rho 
$$
is nonnegative. For $\rho$ given in \eqref{eq:m_H}, it is sufficient to write 
$$
  \Phi(\tb) = \int_0^\infty \left [ \int_0^\infty E_\rho(\tb) m_2(\rho u) d\rho \right] d \a(u)
$$ 
and prove that the inner integral is nonnegative. Using the fact that $E_{\rho/u} (\tb) = u^{-1} E_\rho(\tb/u)$, which
follows immediately from \eqref{eq:Erho}, it is enough to prove that 
$$
 J(\tb):= \int_0^\infty E_{\f32\rho}(\tb) m_2(\rho) d\rho \ge 0, \qquad \tb \in \RR_\sH^3. 
$$
Let $\chi_E$ be the characteristic function of the set $E \subset \RR_\sH^3$. We need the evaluation
\begin{align*}
 & \int_0^\infty \sin (u \rho) \int_\rho^\infty \frac{\sin s}{s} ds d\rho  = \int_0^\infty  \frac{\sin s}{s} \int_0^s \sin (u \rho) d\rho ds \\
 & \qquad \qquad = \frac{1}{u} \int_0^\infty \sin s (1- \cos (s u))\frac{ds}{s} = \frac{\pi}{4 u} (1-\sign (1-|u|) ).
\end{align*}
Together with the fact that $(t_1-t_2) + (t_2-t_3) +(t_3-t_1) =0$ and  $1+ \sign(1-|t_1-t_2|) = 2 \chi_{ \{|t_1-t_2|\le 1\}}(\tb)$, 
it follows from \eqref{eq:Erho} that 
$$
 J(\tb) = - \frac{3 \pi}{2} \left[\frac{\chi_{ \{|t_1-t_2|\le 1\}}(\tb)}{(t_2-t_3)(t_3-t_1)}
          +\frac{\chi_{ \{|t_2-t_3|\le 1\}}(\tb)}{(t_3-t_1)(t_1-t_2)}
          +\frac{\chi_{ \{|t_3-t_1|\le 1\}}(\tb)}{(t_1-t_2)(t_2-t_3)}\right],
$$
The value of $J(\tb)$ depends on regions of $\tb \in \RR_\sH^3$ determined by the support sets of the 
three characteristic functions. In the region $E_{---}:=\{\tb: |t_1-t_2| < 1, 
|t_2-t_3| < 1, |t_3-t_1| < 1\}$, $J(\tb) =0$ since $(t_1-t_2) + (t_2-t_3) +(t_3-t_1) =0$. We now consider
the region $E_{--+}:=\{\tb: |t_1-t_2| < 1, |t_2-t_3| < 1, |t_3-t_1| > 1\}$. In this case, 
$$
  J(\tb) =  - \frac{3 \pi}{4} \left[\frac{1}{(t_2-t_3)(t_3-t_1)}+\frac{1} {(t_3-t_1)(t_1-t_2)}\right]
         = \frac{3 \pi}{4} \frac{1}{(t_1-t_2)(t_2-t_3)},
$$
where the second equality follows from $t_1+t_2+t_3 =0$, which is positive if $t_1-t_2$ and $t_2-t_3$ have 
the same sign. Assume these two factors have different sign, say
$0 < t_1-t_2$ and $t_2-t_3 <0$. Then $\tb \in E_{--+}$ satisfies
$$
  0 < t_1-t_2 < 1, \quad -1 < t_2-t_3<0, \quad |t_3-t_1| > 1.
$$
The third inequality has two possibilities. In the case of $t_3-t_1 > 1$,  the second and the third inequalities imply 
that $t_1<t_2$, which contradicts the first inequality. In the case of $t_3-t_1< -1$, the first and the third inequality 
imply that $t_3 < t_2$, which contradicts the second inequality. Consequently, the set $E_{--+}$ does not 
contain elements for which $0 < t_1-t_2$ and $t_2-t_3 <0$, nor does it contain elements for which $ t_1-t_2<0$ 
and $0< t_2-t_3$. As a result, $J(\tb)$ is nonnegative for $\tb \in E_{--+}$. By symmetry, this holds for permutations
of $E_{--+}$. Next we consider the region $E_{-++}:=\{\tb: |t_1-t_2| < 1, |t_2-t_3| > 1, |t_3-t_1| > 1\}$, for which
$$
  J(\tb) = - \frac{3 \pi}{4}  \frac{1}{(t_2-t_3)(t_3-t_1)}
$$
is nonnegative if $t_2-t_3$ and $t_3-t_1$ have the opposite sign. Assume those two factors have the same sign,
say, $t_2-t_3>0$ and $t_3-t_1>0$, then $t_2-t_1 = t_2-t_3+t_3-t_1 >9$, so that $\tb \in E_{-++}$ satisfies $t_2-t_3>1$, 
$t_3-t_1 >1$ and $t_2-t_1 <1$. However, the first two inequalities imply that $t_2> t_1+2$, which contradicts the 
third inequality. Hence, the set $E_{-++}$ does not contain $\tb$ for which $t_2-t_3$ and $t_3-t_1$ have the same 
sign. Consequently, $J(\tb)$ is nonnegative on $E_{-++}$. By symmetry, this holds for permutations of $E_{-++}$. 
Finally, it is evident that $J(\tb) =0$ on $E_{+++}$, the definition of which should be obvious by now.  Thus, we have
proved that $J(\tb) \ge 0$ for all $\tb \in \RR_\sH^3$, which complete the proof of the theorem. 
\end{proof}

As shown in the proof, the support set of the function $J(\tb)$, $\tb \in \RR_\sH^3$, is relatively small. 
The graph of the function looks like a hexagonal spider (that has six legs), which is depicted in the Figure 3.

\begin{figure}[ht]
\centerline{
\includegraphics[width=0.75\textwidth]{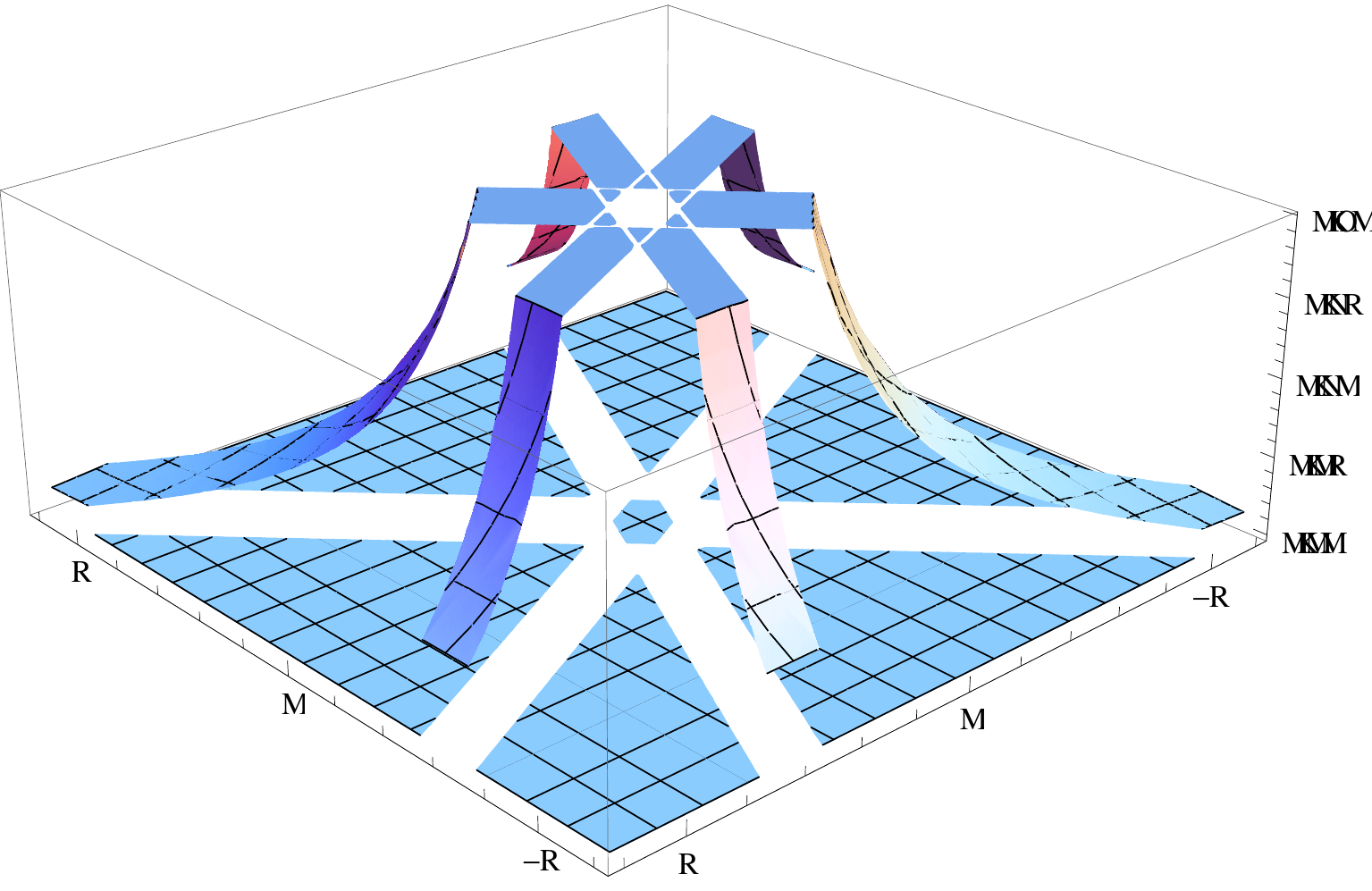}}
\caption{The function $J(\tb)$.}
\end{figure}

We do not know if the sufficient condition in the above theorem is necessary. The theorem shows that the 
class of positive definite $\|\cdot\|_\sH$ radial functions is at least as large as the class of positive
definite $\ell_1$ radial functions. In the latter case, the condition is necessary and it is established by
writing the corresponding kernel in terms of B-spline functions. An analogue representation can be given
in the hexagonal setting, which we discuss below. 

Recall that the first divided difference $[a,b]f$ is defined by 
$$
       [a,b] f := \frac{f(b) - f(a)}{b-a}, \qquad a, b\in \RR, \quad a \ne b, 
$$ 
which can also be written as 
$$
  [a,b] f = \int_{\RR} f'(u) B(u|a,b) du \quad\hbox{with}\quad  B(u|a,b): = \begin{cases}\frac{1}{b-a} & \hbox{if $b > u> a$} \\
  \frac{1}{a-b} & \hbox{if $a > u>b$} \\ 0 & \hbox{otherwise}\end{cases}.
$$
The function $B(\cdot| a,b)$ is the simplest example of $B$-spline functions and it evidently satisfies
$$
    B(u|a,b) \ge 0, \quad u \in \RR, \quad \hbox{and} \quad \int_\RR B(u|a,b) du =1.
$$

\begin{prop} \label{prop:E-kernel}
For $\tb \in \RR_{\sH}^3$ and $u \in \RR$, define 
\begin{align*}
  M_1(u|\tb) & :=   { B(u|t_1-t_3, t_2-t_3)+ B(u|t_2-t_1, t_3-t_1)+B(u|t_3-t_2, t_1-t_2)}.\\
  M(u|\tb) & := \frac12 \left[M_1(u|\tb)+M_1(u|-\tb) \right].
\end{align*}
Then, for $\rho > 0$, 
\begin{align}\label{eq:Erho2}
  E_\rho(\tb)  = 2 \rho \int_0^\infty \cos \left(\tfrac{2}{3} \rho u\right) M(u| \tb)du. 
\end{align}
\end{prop}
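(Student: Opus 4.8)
The plan is to read the claimed identity \eqref{eq:Erho2} from right to left, recognizing the right-hand side as an average of first divided differences of the function $g_\rho(x):=\sin\bigl(\tfrac23\rho x\bigr)$, and then to match the resulting expression against the explicit formula \eqref{eq:Erho} for $E_\rho$. The only structural input needed is the divided-difference representation $[a,b]f=\int_\RR f'(u)\,B(u|a,b)\,du$ recalled just above, together with the elementary symmetry $B(-u|-a,-b)=B(u|a,b)$, which is read off directly from the definition of $B$.

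First I would dispose of the symmetrization and the half-line. From $B(-u|-a,-b)=B(u|a,b)$ one gets $M_1(-u|-\tb)=M_1(u|\tb)$, and hence, replacing $\tb$ by $-\tb$, that $M(u|\tb)=\tfrac12[M_1(u|\tb)+M_1(u|-\tb)]$ is even in $u$. Thus $\int_0^\infty$ may be replaced by $\tfrac12\int_\RR$; moreover the change of variables $u\mapsto-u$ shows that $\cos\bigl(\tfrac23\rho u\bigr)$ integrates to the same value against $M_1(\cdot|-\tb)$ as against $M_1(\cdot|\tb)$. Together these collapse the right-hand side of \eqref{eq:Erho2} to a constant multiple of $\int_\RR \cos\bigl(\tfrac23\rho u\bigr)\,M_1(u|\tb)\,du$.

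Next, for each of the three B-splines composing $M_1$ I would apply the divided-difference identity with $f=g_\rho$, using $g_\rho'(x)=\tfrac23\rho\cos\bigl(\tfrac23\rho x\bigr)$, to obtain
\begin{equation*}
  \int_\RR \cos\bigl(\tfrac23\rho u\bigr)\,B(u|a,b)\,du
    =\frac{3}{2\rho}\,\frac{\sin\bigl(\tfrac23\rho b\bigr)-\sin\bigl(\tfrac23\rho a\bigr)}{b-a}.
\end{equation*}
Inserting the specific knots (for the first spline $a=t_1-t_3$, $b=t_2-t_3$, so $b-a=t_2-t_1$, and likewise for the cyclic permutations) and using $t_1-t_3=-(t_3-t_1)$ together with the oddness of sine, each term becomes a ratio of the form $\bigl(\sin[\tfrac23\rho(t_2-t_3)]+\sin[\tfrac23\rho(t_3-t_1)]\bigr)/(t_1-t_2)$ and its two cyclic companions.

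The final step is purely algebraic. Writing $\alpha=t_1-t_2$, $\beta=t_2-t_3$, $\gamma=t_3-t_1$ (so that $\alpha+\beta+\gamma=0$) and clearing the common denominator $\alpha\beta\gamma$, I would use the relations $\beta+\gamma=-\alpha$, $\gamma+\alpha=-\beta$, $\alpha+\beta=-\gamma$ to collapse the cross terms, leaving, up to the overall constant, the expression $\bigl(\alpha^2\sin(\tfrac23\rho\alpha)+\beta^2\sin(\tfrac23\rho\beta)+\gamma^2\sin(\tfrac23\rho\gamma)\bigr)/(\alpha\beta\gamma)$, which is precisely \eqref{eq:Erho}. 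I expect the main obstacle to be bookkeeping rather than ideas: one must keep the several normalizing constants (the $\tfrac23\rho$ from $g_\rho'$, the prefactor $2\rho$, the $\tfrac12$ in the definition of $M$, and the $\tfrac12$ from the even extension) consistent, and one must note that the individual divided differences carry removable singularities where two of $\alpha,\beta,\gamma$ coincide. The cleanest route is therefore to establish the identity on the open set $\{\alpha\beta\gamma\neq0\}$, where every manipulation is legitimate, and then extend it to all $\tb\in\RR_\sH^3$ by continuity of both sides.
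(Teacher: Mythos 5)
Your route is the paper's own proof run in reverse, not a genuinely different argument: the paper starts from \eqref{eq:Erho}, applies the partial fraction $-\f{t_1-t_2}{(t_2-t_3)(t_3-t_1)}=\f{1}{t_2-t_3}+\f{1}{t_3-t_1}$ to recognize $E_\rho(\tb)$ as $3\bigl([t_1-t_3,t_2-t_3]+[t_2-t_1,t_3-t_1]+[t_3-t_2,t_1-t_2]\bigr)$ applied to $\sin\bigl(\tfrac23\rho\,\cdot\bigr)$, converts the divided differences to B-spline integrals, and folds to $\RR_+$ via $B(-u|a,b)=B(u|-a,-b)$; your ``clearing the common denominator'' step is exactly that partial fraction read backwards. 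The trouble is that the one step you wave off as bookkeeping --- fixing the overall constant --- is precisely where the argument does not close. Carry it out: each spline contributes $\int_\RR\cos\bigl(\tfrac23\rho u\bigr)B(u|a,b)\,du=\f{3}{2\rho}\,[a,b]\sin\bigl(\tfrac23\rho\,\cdot\bigr)$, hence $\int_\RR\cos\bigl(\tfrac23\rho u\bigr)M_1(u|\tb)\,du=\f{3}{2\rho}\cdot\f{E_\rho(\tb)}{3}=\f{E_\rho(\tb)}{2\rho}$. Folding to the half line costs a factor $\tfrac12$, and the $\tfrac12$ in the definition of $M$ costs another; they compound rather than cancel, giving $\int_0^\infty\cos\bigl(\tfrac23\rho u\bigr)M(u|\tb)\,du=\f{E_\rho(\tb)}{4\rho}$, so your right-hand side equals $\tfrac12 E_\rho(\tb)$, not $E_\rho(\tb)$. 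A sanity check at $\tb=(1,0,-1)$ makes this concrete: there $M(u|\tb)=\chi_{(1,2)}(u)+\tfrac12\chi_{(0,1)}(u)$ for $u>0$, so $2\rho\int_0^\infty\cos\bigl(\tfrac23\rho u\bigr)M(u|\tb)\,du=3\sin\bigl(\tfrac43\rho\bigr)-\tfrac32\sin\bigl(\tfrac23\rho\bigr)$, whereas \eqref{eq:Erho} gives $E_\rho(\tb)=6\sin\bigl(\tfrac43\rho\bigr)-3\sin\bigl(\tfrac23\rho\bigr)$.

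To be fair, the factor of $2$ is a defect of the printed statement, not of your skeleton: with $M=\tfrac12\bigl[M_1(\cdot|\tb)+M_1(\cdot|-\tb)\bigr]$ the correct identity is $E_\rho(\tb)=4\rho\int_0^\infty\cos\bigl(\tfrac23\rho u\bigr)M(u|\tb)\,du$, which is exactly what the factor $4$ in $\psi$ of the subsequent proposition (formula \eqref{eq:FTphi-H}) silently requires; the paper's proof is correct through its display $E_\rho(\tb)=2\rho\int_{-\infty}^\infty\cos\bigl(\tfrac23\rho u\bigr)M_1(u|\tb)\,du$ and loses the $2$ only in the final folding sentence. A careful execution of your plan would therefore have exposed a typo rather than confirmed the display; as written, your claim ``up to the overall constant\dots which is precisely \eqref{eq:Erho}'' asserts exactly the part that is false, so the proof is incomplete at its only delicate point. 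Two smaller slips in the same vein: the ratio in your middle step should carry a minus sign, $-\bigl(\sin\bigl[\tfrac23\rho(t_2-t_3)\bigr]+\sin\bigl[\tfrac23\rho(t_3-t_1)\bigr]\bigr)/(t_1-t_2)$, since $[t_1-t_3,t_2-t_3]$ has denominator $(t_2-t_3)-(t_1-t_3)=t_2-t_1$; and the concluding extension ``by continuity of both sides'' across $\{(t_1-t_2)(t_2-t_3)(t_3-t_1)=0\}$ does not work literally, because with the convention $B(u|a,a)\equiv 0$ a spline degenerates to a point mass rather than to zero there, so the right-hand side is \emph{not} continuous at such $\tb$ even though $E_\rho$ is; the identity should instead be asserted for pairwise distinct $t_i$, i.e.\ almost everywhere.
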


\begin{proof}
In the expression of $E_\rho$ in \eqref{eq:Erho}, we apply the partial fraction
$$
   - \frac{t_1-t_2}{(t_2-t_3)(t_3-t_1)} = \frac{1}{t_2-t_3}+  \frac{1}{t_3-t_1} 
$$
to the first term in the right hand side and two analogues partial fractions to the other two terms. Rearranging 
the terms leads to 
\begin{align*} 
  E_\rho(\tb)  & =  3\left( [t_1-t_3,t_2-t_3] \sin \left[\tfrac{2}{3} \rho\{\cdot\}\right] \right. \\
 &  \left. +[t_2-t_1,t_3-t_1] \sin \left[\tfrac{2}{3} \rho\{\cdot\}\right]   
          +[t_3-t_2,t_1-t_2] \sin  \left[\tfrac{2}{3} \rho\{\cdot\}\right] \right). \notag
\end{align*}
Writing the divided differences in terms of B-spline functions lead immediately to 
$$
  E_\rho(\tb) =  2 \rho \int_{-\infty}^\infty \cos \left(\tfrac{2}{3} \rho u\right) M_1(u| \tb)du.
$$
Directly from the definition, it is easy to verify that $B(u|a,b)$ satisfies $B(-u|a,b) = B(u|-a,-b)$, from which follows
$M_1(-u|\tb) = M_1(u|-\tb)$. Consequently, with our definition of $M(u|\tb)$, we can write the integral expression 
of $E_\rho(\tb)$ over $\tb \in \RR_\sH^3$ as the integral over $\RR_+$. 
\end{proof}

As a consequence of Propositions \ref{prop:FT2} and \ref{prop:E-kernel}, we immediate deduce the following result on
the inverse Fourier transform of hexagonal invariant functions. 

\begin{prop}
Let $\phi \in C_0(\RR_+)$ such that the function $u\mapsto u \phi(u)$ is in $L^1(\RR_+)$. Then 
\begin{equation}\label{eq:FTphi-H}
    \int_{\RR_\sH^3} \phi(\|\sb\|_\sH) e^{\frac{2i}{3} \sb\cdot \tb} d\sb = \int_0^\infty \psi (u) M(u|\tb) du
\end{equation}
where 
$$
   \psi(u) := 4 \int_0^\infty \rho \cos \left(\tfrac23\rho u\right)\phi(\rho) d\rho. 
$$
\end{prop}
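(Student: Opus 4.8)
The plan is to chain the two preceding propositions and then swap a double integral. I would first observe that the hypothesis $u \mapsto u\phi(u) \in L^1(\RR_+)$ is precisely what places $\phi(\|\cdot\|_\sH)$ in $L^1(\RR_\sH^3)$: by the radial volume identity recorded after Proposition~\ref{prop:FT2}, $\int_{\|\sb\|_\sH \le r} |\phi(\|\sb\|_\sH)|\,d\sb = 6\int_0^r \rho\,|\phi(\rho)|\,d\rho$, so the left-hand side of \eqref{eq:FTphi-H} is absolutely convergent and equals $\lim_{r\to\infty}\int_{\|\sb\|_\sH \le r}\phi(\|\sb\|_\sH)e^{\frac{2i}{3}\sb\cdot\tb}\,d\sb$. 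Proposition~\ref{prop:FT2} rewrites each truncated integral as $\int_0^r E_\rho(\tb)\phi(\rho)\,d\rho$; since the representation of Proposition~\ref{prop:E-kernel} together with the finiteness of the total mass $\int_0^\infty M(u|\tb)\,du$ yields a pointwise bound $|E_\rho(\tb)| \le C_\tb\,\rho$, the dominating function $C_\tb\,\rho|\phi(\rho)|$ lies in $L^1(\RR_+)$ and dominated convergence gives $\int_{\RR_\sH^3}\phi(\|\sb\|_\sH)e^{\frac{2i}{3}\sb\cdot\tb}\,d\sb = \int_0^\infty E_\rho(\tb)\phi(\rho)\,d\rho$.

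Into this identity I would substitute the B-spline representation $E_\rho(\tb) = c\,\rho\int_0^\infty \cos(\tfrac23\rho u)M(u|\tb)\,du$ supplied by Proposition~\ref{prop:E-kernel}, turning the right-hand side into an iterated integral over $(\rho, u) \in \RR_+ \times \RR_+$. Interchanging the order of integration then pulls $M(u|\tb)$ outside and collapses the inner $\rho$-integral into exactly $\psi(u)$, leaving $\int_0^\infty \psi(u) M(u|\tb)\,du$, which is the claimed formula \eqref{eq:FTphi-H}.

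The only step that needs genuine justification is the interchange of integrals, i.e.\ Fubini's theorem. For fixed $\tb$ the kernel $M(\cdot|\tb)$ is a finite symmetrized sum of the elementary B-splines $B(\cdot|a,b)$, each bounded and supported on the bounded interval with endpoints among the differences $t_i - t_j$; hence $M(\cdot|\tb)$ is bounded, say by $C_\tb$, and supported in some $[0,R_\tb]$. Combined with $|\cos(\tfrac23\rho u)| \le 1$ and $\rho\phi(\rho) \in L^1(\RR_+)$ this gives
$$\int_0^\infty\!\int_0^\infty \rho\,|\phi(\rho)|\,|M(u|\tb)|\,du\,d\rho \;\le\; C_\tb R_\tb \int_0^\infty \rho\,|\phi(\rho)|\,d\rho \;<\; \infty,$$
so the integrand is absolutely integrable on the product and Fubini applies. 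I expect this integrability bookkeeping---resting on the compact support and boundedness of the B-spline kernel---to be the entire substance of the proof; the remainder is pure substitution of the two earlier propositions.
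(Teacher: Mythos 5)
Your proposal follows exactly the route the paper intends: the paper gives no proof beyond the remark that the statement is an immediate consequence of Propositions \ref{prop:FT2} and \ref{prop:E-kernel}, and what you supply is precisely the missing analytic bookkeeping --- the radial volume identity to put $\phi(\|\cdot\|_\sH)$ in $L^1(\RR_\sH^3)$, the bound $|E_\rho(\tb)|\le C\rho$ (which in fact holds with $C$ independent of $\tb$, since $\int_0^\infty M(u|\tb)\,du=\tfrac32$ for every $\tb$) to justify the limit $r\to\infty$ by dominated convergence, and Fubini for the interchange; all of this is sound for $\tb$ with pairwise distinct coordinates, i.e.\ almost every $\tb$, matching the ``almost all'' in Proposition \ref{prop:FT2}. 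The one place you are too breezy is the unspecified constant $c$ in $E_\rho(\tb)=c\,\rho\int_0^\infty\cos(\tfrac23\rho u)M(u|\tb)\,du$: as printed, \eqref{eq:Erho2} has $c=2$, and chaining that literally collapses the inner $\rho$-integral to $\tfrac12\psi(u)$, not $\psi(u)$. In fact \eqref{eq:Erho2} contains a typo: folding $\int_{-\infty}^{\infty}\cos(\tfrac23\rho u)M_1(u|\tb)\,du$ onto $[0,\infty)$ via the evenness of $M(\cdot|\tb)$ in $u$ doubles the integral, so the correct constant is $c=4$ (one can check this directly, e.g.\ at $\tb=(1,0,-1)$), and with $c=4$ your computation yields exactly the stated $\psi(u)=4\int_0^\infty\rho\cos(\tfrac23\rho u)\phi(\rho)\,d\rho$. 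So your proof is correct and coincides with the paper's argument once the constant is actually tracked rather than left symbolic.
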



It is easy to see that the function $M(u|\tb)$ satisfies 
$$
  M(u| \tb) \ge 0, \quad u \in \RR, \,\, \tb \in \RR_\sH^3, \quad\hbox{and} \quad \int_\RR M(u |\tb) du =3. 
$$
Furthermore, it also satisfies 
\begin{equation*}
  M(u| u \tb) = \frac{1}{u} M(1|\tb), \qquad u > 0. 
\end{equation*}
Thus, we only need to consider the $M(1|\tb)$. In Figure 4, we depict this function in the
regular rectangle coordinates. 
\begin{figure}[ht]
\centerline{
\includegraphics[width=1\textwidth]{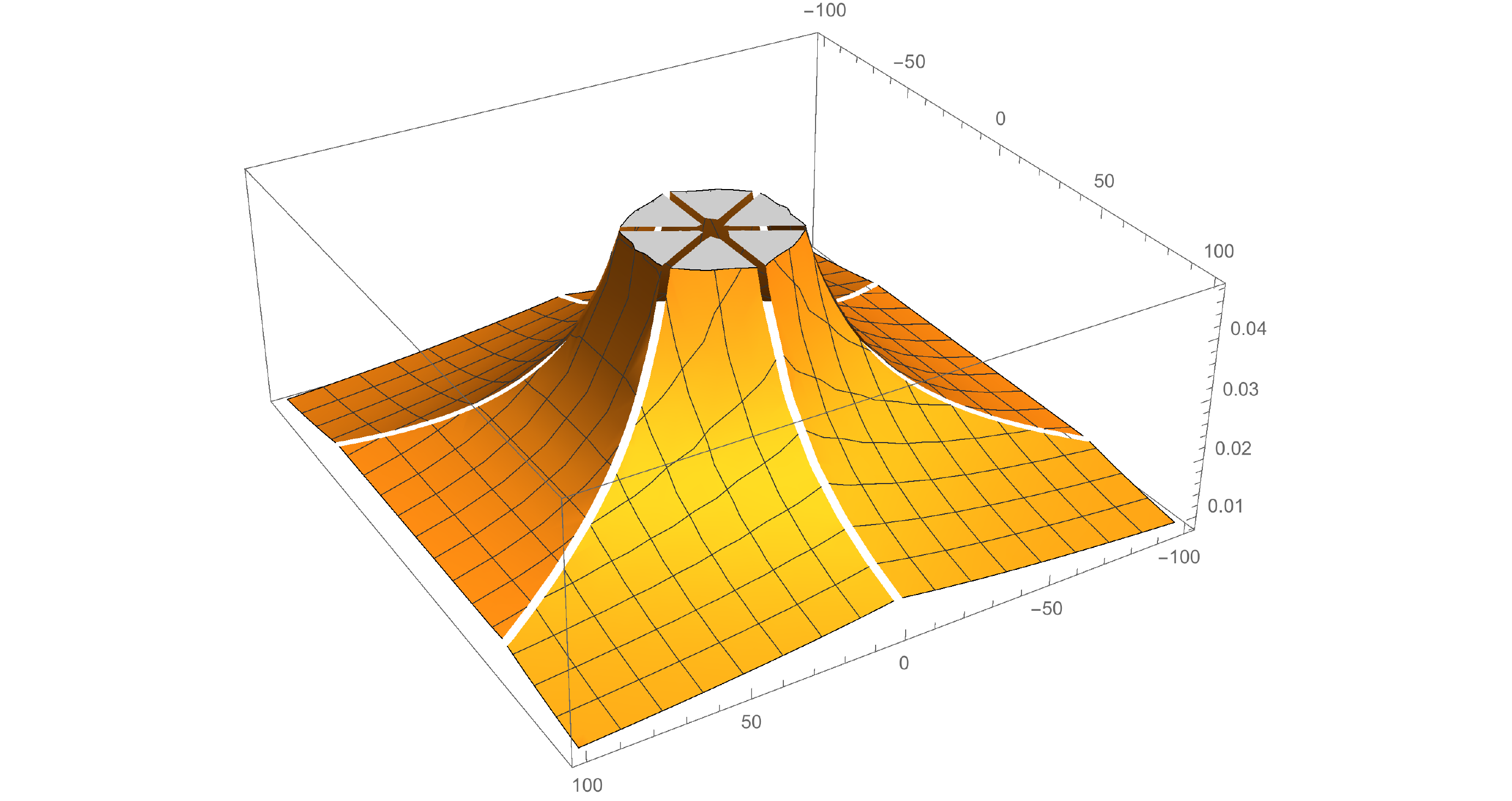}}
\caption{The function $M(1|\cdot)$ on the usual Cartesian coordinates.}
\end{figure}

The formula \eqref{eq:FTphi-H} suggests that we consider the Fourier transform of $M(u|\tb)$, as an analogue 
of the study in the $\ell_1$ case, for which the corresponding B-spline is $B(u|x_1^2,\ldots, x_d^2)$ for $x \in \RR^d$. 
However, the function $x \mapsto B(u|x_1^2,\ldots, x_d^2)$ is integrable on $\RR^d$, which warrants the existence 
of its Fourier transform, whereas the function $\tb \mapsto M(u|\tb)$ is not integrable on $\RR_\sH^3$. The latter
can be seen, for example, from the formula 
$$
   M_1(u|\tb) = \frac{1}{t_2-t_1}+ \frac{1}{t_2-t_3} = \frac{3 t_2}{(t_2-t_1)(t_2-t_3)}, \quad \tb \in  \Omega,
$$
where $\Omega = \{\tb \in \RR_\sH^3: t_2-t_3>1, t_2-t_1>1, t_3-t_1> -1, t_1-t_3>-1\}$.   

Let us also point out that, since $M(u|\tb) \ge 0$, if $\psi$ is nonnegative on $\RR_+$ then,  by \eqref{eq:FTphi-H},
the Fourier integral of $\phi(\|\cdot\|_\sH)$ is nonnegative. This is, however, not necessary. Indeed, if $\phi(u) 
= e^{-u}$, then $\Phi(\tb)= \int_{\RR_\sH^3} \phi(\sb) e^{\frac{2i}{3} \sb \cdot \tb} d \sb\ge 0$ by 
Example \ref{ex:2.3}. However, in this case
$$
  \psi(u) = 4 \int_0^\infty \rho \cos (u \rho) e^{-\rho}d\rho = \frac{4 (1-u^2)}{(1+u^2)^2},
$$ 
which is not nonnegative if $|u | > 1$. Hence, the expression \eqref{eq:FTphi-H} is far less useful than its 
counterpart in the $\ell_1$ case.


\begin{thebibliography}{99}
\bibitem{AB}
        V.~V. Arestov and E. Berdysheva, 
        Tur\'an's problem for positive definite functions with supports in a hexagon. 
        {\it Proc. Steklov Inst. Math.} 2001, Approximation Theory. Asymptotical Expansions, suppl. 1, S20--S29.  

\bibitem{BX}
        H.~Berens and Y.~Xu,  
        $\ell$-1 summability for multivariate Fourier integrals and positivity,
        \textit{Math. Proc. Cambridge Phil. Soc.} \textbf{122} (1997), 149--172.

\bibitem{Bo}
        S.~Bochner. 
        Monotone Funktionen, Stieltjes Integrale und harmonische Analyse.
        {\it Math.~Ann. {\bf 108}} (1933), 378--410.

\bibitem{Go}
        B.~I.~Golubov.
        On Abel--Poisson type and Riesz means.
        {\it Analysis Mathematica} {\bf 7} (1981), 161--184.
  
\bibitem{LSX}
        H.  Li, J. Sun and Y. Xu,
        Discrete Fourier analysis, cubature and interpolation on a hexagon and a triangle. 
        {\it SIAM J. Numer. Anal.}  \textbf{46} (2008), 1653--1681. 

\bibitem{Sc} 
        I.~J.~Schoenberg.
        Metric spaces and completely monotone functions.
        {\it Ann.~of Math. {\bf 39}} (1938), 811--841.

\bibitem{SW}
       E.~M.~Stein and G.~Weiss.
       {\it Introduction to Fourier Analysis on Euclidean Spaces}.
       Princeton Univ.~Press, Princeton 1971.
              
\bibitem{X10} 
        Y. Xu,
        Fourier series and approximation on hexagonal and triangular domains. 
        {\it Const. Approx. } \textbf{31} (2010), 115--138. 
        
         
\end{thebibliography}
\end{document}